\newtheorem{theorem}{Theorem}[section]
\newtheorem{corollary}[theorem]{Corollary}
\newtheorem{lemma}[theorem]{Lemma}
\newtheorem{proposition}[theorem]{Proposition}
\newtheorem{example}[theorem]{Example}
\theoremstyle{definition}
\newtheorem{definition}[theorem]{Definition}
\newtheorem{remark}[theorem]{Remark}
\newcommand{\R}{\mathbb{R}}
\newcommand{\Z}{\mathbb{Z}}
\newcommand{\F}{\mathbb{F}}
\newcommand{\bx}{\mathbf{x}}
\newcommand{\by}{\mathbf{y}}
\newcommand{\V}{\mathcal{V}}
\newcommand{\Y}{\mathcal{Y}}
\newcommand{\CR}{\mathcal{C}}
\newcommand{\ad}{\mathfrak{a}}
\newcommand{\m}{\mathfrak{m}}
\newcommand{\lcm}{\operatorname{lcm}}
\newcommand{\ord}{\operatorname{ord}}
\newcommand{\coeff}{\operatorname{coeff}}
\newcommand{\Mon}{\operatorname{Mon}}
\newcommand{\Gal}{\operatorname{Gal}}
\newcommand{\Orb}{\operatorname{Orb}}
\newcommand{\Div}{\operatorname{Div}}
\newcommand{\Jac}{\operatorname{Jac}}
\newcommand{\im}{\operatorname{im}}
\newcommand{\e}{\operatorname{e}}
\newcommand{\vm}{\vec{m}}
\newcommand{\vd}{\vec{d}}
\newcommand{\ovl}[1]{\overline{#1}}
\newcommand{\wt}[1]{\widetilde{#1}}
\begin{document}

\title{The Krull-Remak-Schmidt decomposition of commutative group algebras}
\author{Robert Christian Subroto}
\email{bobby.subroto@ru.nl}
\address{Department of Digital Security\\Radboud University\\Nijmegen\\The Netherlands}
%
\classification{20C05, 20C20, 13M05, 13M10. 
}
\keywords{group algebras, modular representation theory, commutative algebra, polynomials rings}

\begin{abstract}
We provide the Krull-Remak-Schmidt decomposition of group algebras of the form $k[G]$ where $k$ is a field, which includes fields with prime characteristic, and $G$ a finite abelian group.
We achieved this by studying the geometric equivalence of $k[G]$ which we call circulant coordinate rings.
\end{abstract}

\maketitle


\section{Introduction}
\label{sec:introduction}

The object of interest of this paper is the group algebra $k[G]$, where $k$ is a field and $G$ a finite abelian group.
We view this algebra as a module over itself, which is both Noetherian and Artinian by finiteness of $G$.
As such, $k[G]$ satisfies the criteria for the Krull-Remak-Schmidt theorem, which states that such a module admits a decomposition consisting of indecomposable modules, and that such a decomposition is unique up to isomorphism.

In this paper, we present the complete Krull-Remak-Schmidt decomposition of the group algebra $k[G]$.
We achieved this by studying a geometric object called circulant coordinate rings, which is the geometric equivalence of $k[G]$.

\subsection{Outline}

In Section \ref{SECTION:Prerequisites_and_scope}, we cover some basic results regarding the structure of multivariate polynomial rings.
We also cover the Krull-Remak-Schmidt theorem and some related statements.

In Section \ref{SECTION:Circulant_Rings}, we introduce circulant rings, which is the geometric equivalence of the group algebra $k[G]$.
We also cover some cases at both ends of the spectrum of the Krull-Remak-Schmidt decomposition of circulant rings, which are the indecomposable and the semisimple circulant rings.

In Section \ref{SECTION:Krull-Remak-Schmidt}, we cover the Krull-Remak-Schmidt decomposition of circulant rings in its full generality, where we use results from modular representation theory.

Ine Section \ref{SECTION:Application}, we apply the Krull-Remak-Schmidt decomposition to the special case when we are working over a finite field.
We also give a concrete example.

\subsection{Notation}

The cardinality or size of a set $S$ is denoted as $\# S$.
The set of all positive integers strictly greater than $0$ is denoted as $\Z_{>0}$.
Similarly we define $\Z_{\geq 0}$.
If $S$ is a finite index set, we refer to $R^{\oplus S}$ as the direct sum of copies of the ring $R$ indexed by $S$.

\paragraph{\textbf{Rings and field extensions}}
A ring $R$ is called a unitary ring if it contains a unit element.
For such a unitary ring $R$, we denote the multiplicative group of invertible elements of $R$ by $R^*$.
All rings in this paper are assumed to be unitary rings.

For a field $k$, we denote its algebraic closure by $\ovl{k}$. 
Given a field extension $l / k$, we denote the degree of the extension as $[l : k]$.
Moreover, given a tuple $\bx := ( x_1, \ldots ,x_n ) \in \ovl{k}^n$, the smallest field extension over $k$ containing all $x_i$ is denoted by $k(\bx)$.
We denote $\mu_m$ as the set of all $m$-th roots of unity in $\ovl{k}$.
When $l/k$ is a Galois extension, we  write the Galois group as $\Gal(l/k)$.

\paragraph{\textbf{Multivariate polynomials and vanishing set}}
We denote $k[X_1,...,X_n]$ by $A^n_k$.
The set of monomials is referred to as $\Mon(n)$.
Given a polynomial $f \in A^n_k$, we denote the coefficient of $f$ at the monomial $M \in \Mon(n)$ as $\coeff_M(f)$.   
For a monomial $M := \prod_{i=1}^n  X_i^{m_i} \in \Mon(n)$, the total degree of $M$ is defined as $\deg(M) := \sum_{i=1}^n m_i$. 
The $i$-th partial degree of $M$ is defined as $\deg_i(M) := m_i$.
We extend the notions of partial- and total degrees to general polynomials.
for a polynomial $f \in A^n_k$, we define the total degree of $f$ as: 
\begin{align*}
\deg(f) := \max(\deg(M) : M \in \Mon(\vm) \text{ and } \coeff_M(f) \neq 0).
\end{align*}
Similarly, the $i$-th partial degree of $f$ is defined as: 
\begin{align*}
\deg_i(f) := \max(\deg_i(M) : M \in \Mon(\vm) \text{ and } \coeff_M(f) \neq 0).
\end{align*}

\begin{example}
Take $f(X_1, X_2, X_3) := X_1^2 - 6 X_1 X_2^2 X_3^3 - 4 X_1 X_2^4 - 7 X_1^2 \in A^3_{\R}$.
Then $\deg(f) = 6$, $\deg_1(f) = 2$, $\deg_2(f) = 4$ and $\deg_3(f) = 3$.
\end{example}

For an ideal $\ad$ of $A^n_k$, we define the vanishing set of $\ad$ as
\begin{align*}
\V(\ad) := \left\{ \bx \in \ovl{k}^n \mid f(\bx) = 0 \text{ for all } f \in \ad \right\}.
\end{align*}

\paragraph{\textbf{Number theory}}
For an integer $m > 0$, we denote the ring of integers modulo $m$ as $\Z_m$.
This is not to be confused with the $p$-adic ring of integers which also uses the same notation in the case when $m = p$.
When referring to an element $x \in \Z_m$, we refer to the integer representation of the residue class of $x$, hence $x \in \{ 0, 1, \ldots, m-1 \}$.

Let $p$ be a prime number and $m$ be any positive integer.
The $p$-adic valuation of $m$ is defined as $v_p(m) := \max\{ j \in \Z_{\geq 0} : p^j \mid m \}$.
We define $r_p(m) = \frac{m}{v_p(m)}$, which is the greatest divisor of $m$ coprime to $p$.

\paragraph{\textbf{Tuple notation}}

Let $\vm := (m_1, \ldots, m_n) \in \Z_{\geq 0}^n$ be a non-zero $n$-tuple.
We define the product ring $\Z_{\vm} := \Z_{m_1} \times \ldots \times \Z_{m_n}$.
For a prime number $p$, we define $v_p(\vm) := (v_p(m_1), \ldots, v_p(m_n))$ and $r_p(\vm) := (r_p(m_1), \ldots, r_p(m_n))$.
Also, we define $p^{v_p(\vm)} := (p^{v_p(m_1)}, \ldots, p^{v_p(m_n)})$.
For two $n$-tuples $\vm = (m_1, \ldots, m_n)$ and $\vm' = (m'_1, \ldots, m'_n)$, we say that $\vm > \vm'$ if $m_i \geq m'_i$ for all $1 \leq i \leq n$, and where there exists at least one $1 \leq j \leq n$ such that $m_j > m'_j$.
We say that $\vm$ is \emph{absolutely larger} than $\vm'$ if $m_i > m'_i$ for all $1 \leq i \leq n$, which we write as $\vm \gg \vm'$.
Similarly, we define $\vm < \vm'$ and $\vm \ll \vm'$ (\emph{absolutely smaller}).

\section{Prerequisites}
\label{SECTION:Prerequisites_and_scope}

In this section, we discuss some results related to the structure of polynomial rings which will prove useful in this paper.
Moreover, we briefly cover the Krull-Remak-Schmidt theorem.

\subsection{Structure of multivariate polynomial rings}

\begin{definition}
\label{DEF:polyrep}
Given $n$-tuples $j := (j_1,...,j_n), t := (t_1,...,t_n) \in \Z_{\geq 0}^n$, we define the polynomial $\Y^j_t := \prod_{i=1}^n (X_i^{t_i} - 1)^{j_i}$ in $A^n_k := k[X_1, \ldots, X_n]$.

Moreover, we define 
\begin{align*}
\mathcal{R}_t := \{ f \in A^n_k \mid \deg_i(f) < t_i \text{ for all } 1 \leq i \leq n  \}.
\end{align*}

Also, we define
\begin{align*}
\mathcal{R}_t \Y_t^j := \{ f \cdot \Y^j_t \mid f \in \mathcal{R}_t \},
\end{align*}
which we view as an additive subgroup of $A^n_k$.
\end{definition}

\begin{theorem}
\label{THM:polyStructureTheorem}
Let $t$ be an $n$-tuple.
We have the following statements:
\begin{itemize}
\item[1.]	For an $n$-tuple $j$, if $f \in \mathcal{R}_t \Y^j_t$ is non-zero, then $t_i \cdot j_i \leq \deg_i(f) < t_i \cdot (j_i + 1)$ for all $1 \leq i \leq n$;
\item[2.] 	Given two $n$-tuples $j,j'$ such that $j \neq j'$, we have $\mathcal{R}_t \Y^j_t \cap \mathcal{R}_t \Y^{j'}_t = \{ 0 \}$;
\item[3.]	Let $\e_i$ be the $i$-th standard unit vector in $\Z_{\geq 0}^n$, then $\mathcal{R}_{t (\e_i + 1)} = \mathcal{R}_t \oplus \mathcal{R}_t \Y_t^{\e_i}$ for all $1 \leq i \leq n$;
\item[4.] 	Let $j$ and $t$ be $n$-tuples such that all their entries are non-zero, then $\mathcal{R}_{t \cdot j} := \bigoplus_{0^n \leq y \ll j} \mathcal{R}_t \Y^y_t$.
\end{itemize}
\end{theorem}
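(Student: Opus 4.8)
The plan is to prove the four items in the stated order, using items 1 and 2 as the degree bookkeeping that drives items 3 and 4. For item 1, I would use that $A^n_k$ is an integral domain: viewing it as the one-variable polynomial ring $\bigl(k[X_\ell : \ell \neq i]\bigr)[X_i]$, the $i$-th partial degree is the degree function of a polynomial ring over a domain, hence additive, $\deg_i(gh) = \deg_i(g) + \deg_i(h)$. Since $\deg_i(\Y^j_t) = t_i j_i$ — only the factor $(X_i^{t_i}-1)^{j_i}$ contributes to the $i$-th partial degree — and a nonzero $g \in \mathcal{R}_t$ has $0 \leq \deg_i(g) < t_i$, writing $f = g\,\Y^j_t$ yields $t_i j_i \leq \deg_i(f) < t_i(j_i+1)$. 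Item 2 is then immediate: a nonzero $f$ in $\mathcal{R}_t\Y^j_t \cap \mathcal{R}_t\Y^{j'}_t$ would force $j_i = \lfloor \deg_i(f)/t_i \rfloor = j'_i$ for every $i$ by item 1, contradicting $j \neq j'$.

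For item 3, the inclusion $\mathcal{R}_t + \mathcal{R}_t\Y_t^{\e_i} \subseteq \mathcal{R}_{t(\e_i+1)}$ is read off from item 1 (the $i$-th partial degree of any element stays below $2t_i$, and every $\ell$-th partial degree with $\ell \neq i$ stays below $t_\ell$), and directness of the sum is item 2 with $j = 0^n$, $j' = \e_i$. For the reverse inclusion I would divide a given $f \in \mathcal{R}_{t(\e_i+1)}$ by $X_i^{t_i}-1$, which is monic in $X_i$ with coefficients in $k$: this gives $f = q\,(X_i^{t_i}-1) + r$ with $\deg_i(r) < t_i$ and $\deg_i(q) \leq \deg_i(f) - t_i < t_i$, and, the division being carried out coefficientwise over $k[X_\ell : \ell \neq i]$, with $\deg_\ell(q), \deg_\ell(r) \leq \deg_\ell(f) < t_\ell$ for $\ell \neq i$; hence $q, r \in \mathcal{R}_t$ and $f = r + q\,\Y_t^{\e_i}$.

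Item 4 is the main point. Each $\mathcal{R}_t\Y^y_t$ with $0^n \leq y \ll j$ lies in $\mathcal{R}_{t\cdot j}$ by item 1, since $t_i y_i < t_i j_i$. To see that these subspaces span $\mathcal{R}_{t\cdot j}$, I would iterate the division of item 3 one variable at a time: repeatedly dividing $f \in \mathcal{R}_{t\cdot j}$ by $X_1^{t_1}-1$ expresses it as $\sum_{y_1 < j_1} r_{y_1}(X_1^{t_1}-1)^{y_1}$ with $\deg_1(r_{y_1}) < t_1$ and — because $X_1^{t_1}-1$ is monic in $X_1$ with scalar coefficients — with all other partial degrees still below the corresponding $t_\ell j_\ell$; repeating in $X_2, \ldots, X_n$ writes $f$ as $\sum_{0^n \leq y \ll j} g_y\,\Y_t^y$ with each $g_y \in \mathcal{R}_t$. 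For directness I would then use a dimension count rather than an explicit cancellation-of-top-terms argument: the resulting surjection $\bigoplus_{0^n \leq y \ll j} \mathcal{R}_t \twoheadrightarrow \mathcal{R}_{t\cdot j}$, $(g_y)_y \mapsto \sum_y g_y\,\Y_t^y$, has source of dimension $\bigl(\prod_i j_i\bigr)\bigl(\prod_i t_i\bigr) = \prod_i(t_i j_i) = \dim_k \mathcal{R}_{t\cdot j}$, so it is an isomorphism and $\mathcal{R}_{t\cdot j} = \bigoplus_{0^n \leq y \ll j}\mathcal{R}_t\Y^y_t$. The one place I expect to need care is exactly this bookkeeping in item 4 — verifying that division by $X_i^{t_i}-1$ never raises a partial degree in another variable, so the iterated process stays inside the intended $\mathcal{R}$'s — together with preferring the dimension count over a hands-on argument in order to settle directness of a sum of more than two subspaces cleanly.
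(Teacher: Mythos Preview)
Your proof is correct. Items 1 and 2 match the paper's argument essentially verbatim. For item 3, the paper works monomial by monomial --- showing that each monomial $M$ with $t_1 \le \deg_1(M) < 2t_1$ lies in $\mathcal{R}_t \oplus \mathcal{R}_t\Y_t^{\e_1}$ via the explicit identity $M = M' + M'\,\Y_t^{\e_1}$ --- which is precisely your division by $X_i^{t_i}-1$ specialized to a single monomial; your phrasing via the division algorithm is slightly more streamlined but not materially different.

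The genuine divergence is in item 4. The paper deduces it from item 3 by induction on $j$, stepping from $(1,\ldots,1)$ to $j$ one unit vector $\e_i$ at a time; directness is then inherited from the two-summand directness at each inductive step. You instead bypass item 3, iterate one-variable division directly to obtain the spanning statement, and settle directness by a $k$-dimension count. Your route is a bit more self-contained (the paper's inductive step from $j$ to $j+\e_i$ is gestured at rather than spelled out), and the dimension count is a clean device for handling directness of many summands at once --- a place where invoking only the pairwise trivial intersection of item 2 would not be enough on its own.
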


\begin{proof}
We prove these statements separately.

\paragraph{Statement 1}
Observe that $\deg_i(\Y_t^j) = j_i \cdot t_i$ for all $1 \leq i \leq n$. 
Also for any polynomial $f \in A^n_k$, we have that $\deg_i(f \cdot \Y_t^j) = \deg_i(f) + \deg_i(\Y_t^j)$.
The statement follows from the fact that $0 \leq \deg_i(f) < t_i$ for all $f \in \mathcal{R}_t$.

\paragraph{Statement 2} 
Without loss of generality, assume that $j_1 \neq j'_1$.
Statement 1 implies that the $1$-th partial degree of the non-zero polynomials in $\mathcal{R}_t \Y^j_t$ and $\mathcal{R}_t \Y^{j'}_t$ cannot be in the same range.
This proves the claim.

\paragraph{Statement 3}
Without loss of generality, take $i = 1$.
Consider the monomial $M := \prod_{l=1}^n X_l^{m_l}$ where $0 \leq m_l < t_l$ for all $2 \leq l \leq n$, and $t_1 \leq m_1 < 2 t_1$.
The goal is to show that such a monomial is contained in $\mathcal{R}_t \oplus \mathcal{R}_t \Y_t^{\e_1}$.

Observe that $\Y_t^{\e_1} = X_1^{t_1} - 1$.
Since $M' :=  X_1^{m_1 - t_1} \cdot \prod_{l=2}^n X_l^{m_l} \in \mathcal{R}_t$, we have that $M' \cdot \Y_t^{\e_1} \in \mathcal{R}_t \Y_t^{\e_1}$.
Observe that
\begin{align*}
M' \cdot \Y_t^{\e_1}
= X_1^{m_1 - t_1} \cdot \prod_{i=2}^n X_i^{m_i} \cdot \Y_t^{\e_1} 
= X_1^{m_1 - t_1} \cdot \prod_{i=2}^n X_i^{m_i} \cdot ( X_1^{t_1} - 1)
= M - M'.
\end{align*}
This implies that $M$ is contained in $\mathcal{R}_t \oplus \mathcal{R}_t \Y_t^{\e_1}$.
Since this is true for any such monomial $M$, the claim follows.

\paragraph{Statement 4}
Observe that every tuple $j = (j_1, \ldots, j_n)$ with non-zero entries equals $(1, \ldots, 1) + \sum_{i=1}^n (j_i - 1) \e_i$.
Hence the claim follows from inductively applying Statement 3, given that the claim holds for the case $j = (1, \ldots, 1)$.

Observe that for $j = (1, \ldots, 1)$, the only tuple which is absolutely smaller than $j$ is $j^0 := (0, \ldots, 0)$.
Note that
\begin{align*}
\mathcal{R}_t \Y_t^{j^0} 
= \left\{ f \cdot \prod_{i=1}^n (X_i^{t_i} - 1)^0 \mid f \in \mathcal{R}_t \right\}
= \{ f \mid f \in \mathcal{R}_t \} = \mathcal{R}_t,
\end{align*}
which proves the claim.
\end{proof}

As a consequence of Theorem \ref{THM:polyStructureTheorem}, for every element $f \in \mathcal{R}_{t \cdot j}$, for each $0^n \leq y \ll j$, there exists  a unique $f_{y} \in \mathcal{R}_t \Y_t^y$ such that $f = \sum_{0^n \leq y \ll j} f_{y}$.

\begin{definition}
For every $f \in \mathcal{R}_{t \cdot j}$, for each $0^n \ll y \ll j$, we define $f_y$ as the \textbf{$\Y_t^y$-component} of $f$.
\end{definition}

\subsection{Krull-Remak-Schmidt theorem and semilocal rings}

In this subsection, we briefly cover the Krull-Remak-Schmidt theorem.
We start by covering some basic notions from module theory.

Consider a unitary ring $R$ (not necessarily commutative), and let $M$ be an $R$-module.
We say that $M$ is \textbf{indecomposable} if it cannot be written as the direct sum of two proper $R$-submodules.
A straightforward example of indecomposable modules are \textbf{simple} modules, which are modules with the $0$-module as the only proper submodule.

Indecomposable $R$-modules are important, as they form the building blocks for a large family of $R$-modules.
This is shown in the Krull-Remak-Schmidt theorem.

\begin{theorem}[\textbf{Krull-Remak-Schmidt}]
Let $M$ be a Noetherian and Artinian $R$-module over some unitary ring $R$.
Then the following statements are true:
\begin{itemize}
\item[1.]	There exists an $R$-module decomposition $M \cong \bigoplus_{i = 1}^s M_i$ of $M$ where the $M_i$ are indecomposable $R$-modules;
\item[2.]	The decomposition is unique up to isomorphism, i.e., given another decomposition $\bigoplus_{j = 1}^{s'} M'_j$ of $M$ consisting of indecomposable $R$-modules, it satisfies the following properties:
\begin{itemize}
\item[i.]	$s = s'$;
\item[ii.]	there is a permutation $\sigma$ of $\{1, \ldots, s \}$ such that $M'_i \cong M_{\sigma(i)}$ for any $1 \leq i \leq s$.
\end{itemize}
\end{itemize}
\end{theorem}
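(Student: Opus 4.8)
The plan is to treat the two statements by the classical route: induction on length for existence, and Fitting's lemma together with an exchange argument for uniqueness. Since $M$ is both Noetherian and Artinian it admits a composition series, hence a well-defined finite length $\ell(M)$, and every submodule and quotient of $M$ inherits this property. For Statement 1 I would induct on $\ell(M)$: if $M$ is indecomposable (in particular if $\ell(M) \le 1$) there is nothing to prove; otherwise $M \cong M' \oplus M''$ with $M'$ and $M''$ proper and nonzero, so $\ell(M'), \ell(M'') < \ell(M)$, and applying the inductive hypothesis to each summand and concatenating the resulting lists of indecomposables yields the decomposition of $M$.

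For Statement 2 the essential ingredient is a lemma: the endomorphism ring $\End_R(N)$ of a nonzero indecomposable $R$-module $N$ of finite length is local. I would prove this via Fitting's lemma — for $\varphi \in \End_R(N)$ one has $N = \ker(\varphi^n) \oplus \im(\varphi^n)$ once $n$ is large enough — so indecomposability forces each $\varphi$ to be either nilpotent or an automorphism; since $1_N - \nu$ is invertible whenever $\nu$ is nilpotent, the non-automorphisms are closed under addition and form a two-sided ideal, which is exactly the condition that $\End_R(N)$ be local.

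Given two decompositions $M = \bigoplus_{i=1}^s M_i = \bigoplus_{j=1}^{s'} M'_j$ into indecomposables, I would argue by induction on $s$. Writing $\iota_i, \pi_i$ (resp. $\iota'_j, \pi'_j$) for the inclusions and projections of the first (resp. second) decomposition, the relation $\sum_j \iota'_j \pi'_j = 1_M$ yields $1_{M_1} = \sum_j (\pi_1 \iota'_j)(\pi'_j \iota_1)$ in $\End_R(M_1)$. As $\End_R(M_1)$ is local and the identity is a unit, some summand — after relabelling, the one with $j = 1$ — is an automorphism of $M_1$. Setting $f := \pi'_1 \iota_1 \colon M_1 \to M'_1$ and $g := \pi_1 \iota'_1 \colon M'_1 \to M_1$, the map $gf$ is an automorphism of $M_1$, so $f$ is a split monomorphism and $g$ a split epimorphism; then $M'_1 = \im(f) \oplus \ker(g)$, and indecomposability of $M'_1$ forces $\ker(g) = 0$, whence $g$ — and therefore $f$ — is an isomorphism, so $M_1 \cong M'_1$. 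I would then verify the exchange property $M = M'_1 \oplus \bigoplus_{i \ge 2} M_i$ directly, using that $g = \pi_1|_{M'_1}$ is an isomorphism onto $M_1$ (so $M'_1 \cap \ker \pi_1 = 0$, and any $m \in M$ equals $g^{-1}(\pi_1 m) + (m - g^{-1}(\pi_1 m))$ with the second term in $\ker \pi_1 = \bigoplus_{i \ge 2} M_i$). Passing to $M/M'_1$ gives $\bigoplus_{i \ge 2} M_i \cong \bigoplus_{j \ge 2} M'_j$, and the inductive hypothesis delivers $s - 1 = s' - 1$ together with a permutation matching the remaining summands; combined with $M_1 \cong M'_1$ this completes the proof.

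The step I expect to be the real obstacle is the exchange: one must check not merely that some $M'_j$ is abstractly isomorphic to $M_1$, but that this particular $M'_1$ can replace $M_1$ as a direct summand of $M$, which is precisely what makes the induction on $M/M'_1$ legitimate. The remaining ingredients — existence of a finite length, Fitting's lemma, and locality of endomorphism rings of indecomposables — are standard and only need to be assembled in the right order.
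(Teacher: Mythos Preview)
Your proposal is correct and follows the classical route to Krull--Remak--Schmidt: existence by induction on length, locality of $\End_R(N)$ for finite-length indecomposable $N$ via Fitting's lemma, and the exchange argument for uniqueness. The paper, however, does not prove this theorem at all; it simply cites \cite[Theorem~1.4.7.]{schneider2012modular} and moves on, treating the result as standard background. So there is no meaningful comparison of approaches to make here --- you have supplied a genuine (and correct) proof where the paper supplies only a reference, and in fact the reference you would be reconstructing is essentially the same argument found in Schneider's notes.
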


\begin{proof}
See \cite[Theorem~1.4.7.]{schneider2012modular}.
\end{proof}

One can view a ring $R$ as a module over itself, where the $R$-scaling is the ring multiplication.
In this setting, left (or right) submodules of $R$ are the left (or right) ideals of $R$.

\begin{theorem}
\label{THM:groupalg_Noether}
The group algeba $k[G]$ is Artinian and Noetherian if and only if $G$ is finite.
\end{theorem}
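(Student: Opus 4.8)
The plan is to prove the two implications separately: ``$G$ finite $\Rightarrow k[G]$ Artinian and Noetherian'' is immediate from a dimension count, while the converse I would establish by contraposition, exhibiting an infinite strictly monotone chain of ideals whenever $G$ is infinite. For the first implication, the group elements form a $k$-basis of $k[G]$, so $\dim_k k[G] = \#G < \infty$; since $k$ sits inside $k[G]$, every ideal is in particular a $k$-subspace, and a strictly ascending or strictly descending chain of subspaces of a finite-dimensional space is finite (the sequence of $k$-dimensions is strictly monotone and bounded by $\#G$). Hence $k[G]$ satisfies both chain conditions on ideals.

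For the converse, assume $G$ is infinite and split into two cases. If $G$ is \emph{not} finitely generated, I build a strictly increasing chain of finitely generated --- hence proper --- subgroups $\{1\} = H_0 \subsetneq H_1 \subsetneq H_2 \subsetneq \cdots$ by repeatedly adjoining to $H_i$ an element of $G \setminus H_i$. Setting $J_i = \ker\big(k[G] \to k[G/H_i]\big)$, where $k[G/H_i]$ is the left-coset permutation $k[G]$-module --- equivalently, $J_i$ is the left ideal generated by $\{h - 1 : h \in H_i\}$ --- one has $J_i \subseteq J_{i+1}$, while for $h \in H_{i+1}\setminus H_i$ the element $h - 1$ lies in $J_{i+1}$ but not in $J_i$, its image in $k[G/H_i]$ being the nonzero element $hH_i - H_i$. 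Thus $(J_i)_i$ is a strictly ascending chain of left ideals and $k[G]$ is not Noetherian. If instead $G$ is finitely generated and infinite, I must show $k[G]$ is not Artinian. When $G$ is abelian --- the standing setting of this paper --- the structure theorem gives $G \cong \Z^{r} \oplus T$ with $T$ finite and $r \geq 1$, hence a surjection $G \to \Z$ and a $k$-algebra surjection $\pi\colon k[G] \to k[\Z] \cong k[t, t^{-1}]$; since $k[t,t^{-1}]$ is an integral domain in which $t-1$ is not a unit, the ideals $\pi^{-1}\big((t-1)^n\big)$ form a strictly descending chain, so $k[G]$ is not Artinian. In every case $k[G]$ fails to be simultaneously Artinian and Noetherian, which is the contrapositive.

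I expect the finitely-generated-but-infinite case to be the real obstacle. For abelian $G$ the reduction to $k[\Z]$ above settles it with no extra input, but for a completely arbitrary finitely generated infinite group --- such as an infinite finitely generated torsion group, which need not surject onto $\Z$ --- no such elementary reduction is available; there one invokes the classical theorem (Connell) that $k[G]$ is one-sided Artinian precisely when $G$ is finite, which I would cite for the fully general statement while keeping the self-contained argument above for the abelian groups of interest here.
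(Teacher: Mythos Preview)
Your argument is correct for the abelian case that this paper actually uses, and you are right to flag that the finitely-generated-infinite non-abelian case (e.g.\ infinite Burnside-type groups) needs Connell's theorem rather than an elementary reduction. The paper, however, does not give any argument at all: its entire proof is the single line ``See \cite{connell1963group}.'' So your approach is genuinely different in that you supply a self-contained proof --- dimension count for the forward direction, and for the converse an explicit ascending chain of relative augmentation ideals when $G$ is not finitely generated, together with the pullback of the descending chain $\big((t-1)^n\big)$ from $k[t,t^{-1}]$ when $G$ is finitely generated infinite abelian. What the paper's route buys is brevity and full generality (Connell's result covers arbitrary $G$, and in fact shows the stronger statement that one-sided Artinian already forces $G$ finite); what your route buys is that the reader of a paper about abelian $G$ need not chase a reference, since your argument is elementary and complete in the setting that matters here.
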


\begin{proof}
See \cite{connell1963group}.
\end{proof}

As a result of Theorem \ref{THM:groupalg_Noether}, a group algebra $k[G]$ where $G$ is a finite abelian group satisfies the conditions of the Krull-Remak-Schmidt theorem.
As such, $k[G]$ can be  decomposed into indecomposable modules which are unique up to isomorphism.
The main scope of this paper is to provide such a decomposition of $k[G]$.

\begin{lemma}
\label{LEM:local_indecomposable}
Let $R$ be a local ring, then $R$ is indecomposable as a module over itself.
\end{lemma}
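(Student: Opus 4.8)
The plan is to argue by contraposition: I will show that if $R$ is decomposable as a module over itself, then $R$ is not local. Suppose $R \cong I \oplus J$ as $R$-modules, where $I$ and $J$ are nonzero (left) ideals of $R$. Writing $1 = e + f$ with $e \in I$ and $f \in J$, the key first step is to observe that $e$ and $f$ are a pair of complementary idempotents: from $e = e \cdot 1 = e^2 + ef$ and the fact that $e^2 \in I$, $ef \in I \cap J = \{0\}$ (this last equality because the sum is direct), we get $e^2 = e$ and similarly $f^2 = f$, as well as $ef = fe = 0$.

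The second step is to rule out the trivial cases. If $e = 0$ then $I = Re = 0$, contradicting $I \neq 0$; likewise $f \neq 0$. Also $e \neq 1$, since $e = 1$ would force $f = 1 - e = 0$. So $e$ is a nontrivial idempotent: $e \notin \{0, 1\}$.

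The third and final step is to derive the contradiction with locality. A local ring has the property that the non-units form a (two-sided) ideal, equivalently that for every $x \in R$, either $x$ or $1 - x$ is a unit. Applying this to $x = e$: if $e$ is a unit, then from $e^2 = e$ we cancel to get $e = 1$, excluded; if $1 - e = f$ is a unit, then from $f^2 = f$ we get $f = 1$, hence $e = 0$, also excluded. This contradiction shows $R$ cannot be local, completing the contrapositive. (If one prefers the formulation of local via the Jacobson radical $\mathfrak{m}$: a nontrivial idempotent $e$ cannot lie in $\mathfrak{m}$ since idempotents in the radical are zero, and $e$ cannot be a unit as shown, so $R$ has a non-unit outside the unique maximal ideal, a contradiction.)

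I do not anticipate a genuine obstacle here; the only mild subtlety is bookkeeping about which definition of "local ring" is in force — whether "unique maximal left ideal", "non-units form an ideal", or "$R/\mathfrak{m}$ a division ring" — but all standard formulations are equivalent for the purpose at hand, and the idempotent argument adapts immediately to each. Since the paper works with commutative group algebras, I would state the lemma and proof for commutative $R$, which removes even the left/right distinction, though nothing essential changes in the noncommutative case.
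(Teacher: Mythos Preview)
Your argument is correct: producing a nontrivial idempotent from a direct-sum decomposition $R = I \oplus J$ via $1 = e + f$, and then observing that a local ring admits no nontrivial idempotents (since either $e$ or $1-e$ would have to be a unit), is the standard and completely valid route. One small expository point: when you write $ef \in I \cap J$, the inclusion $ef \in J$ is clear from $J$ being a left ideal, while $ef \in I$ is best justified by $ef = e - e^2 \in I$; making that explicit would tighten the write-up.

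As for comparison with the paper: the paper does not actually prove this lemma but simply defers to a reference (Proposition~3.1 in Jacobson's \emph{Basic Algebra~II}). So your self-contained idempotent argument supplies strictly more detail than the paper itself, and there is nothing further to compare.
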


\begin{proof}
This is a direct consequence of \cite[Proposition~3.1]{jacobson2009basic}.
\end{proof}

The following results will prove useful in finding the decomposition.

\begin{theorem}
\label{THM:important_result}
Let $(R_i)_{i \in I}$ be a family of local rings where the index set $I$ is finite, and consider the product ring $R :=\bigoplus_{i \in I} R_i$.
Let us write $\ovl{R_j}$ as the subset $\{ (r_i)_{i \in I} \in R \mid r_i = 0 \text{ for all } i \neq j \}$.
Then the $\ovl{R_i}$ are indecomposable $R$-submodules of $R$, and $R$ admits the Krull-Remak-Schmidt decomposition
$R = \bigoplus_{i \in I} \ovl{R_i}$.
\end{theorem}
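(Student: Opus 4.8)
The plan is to prove the two assertions of Theorem~\ref{THM:important_result} in turn: first that each $\ovl{R_i}$ is an indecomposable $R$-submodule, and then that $R = \bigoplus_{i \in I} \ovl{R_i}$ is a Krull-Remak-Schmidt decomposition. The first assertion is where the real content lies; the second is essentially a bookkeeping statement once the first is in hand.

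First I would observe that $\ovl{R_j}$ is closed under addition and under multiplication by arbitrary elements of $R$, since componentwise multiplication kills every coordinate already equal to $0$; hence $\ovl{R_j}$ is an $R$-submodule. To analyse it as a module, I would identify its $R$-module structure: an element of $R$ acts on $\ovl{R_j}$ only through its $j$-th coordinate, so $\ovl{R_j}$ is isomorphic, as an $R$-module, to $R_j$ regarded as a module over itself via the projection $R \twoheadrightarrow R_j$ (equivalently, $\ovl{R_j} \cong R_j$ with the $R$-action factoring through $R_j$). Crucially, the $R$-submodules of $\ovl{R_j}$ are exactly the ideals of $R_j$ (transported across this identification), because the $R$-action and the $R_j$-action have the same orbits of submodules. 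Therefore $\ovl{R_j}$ is indecomposable as an $R$-module if and only if $R_j$ is indecomposable as a module over itself, and the latter holds by Lemma~\ref{LEM:local_indecomposable} since $R_j$ is local.

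Next I would verify that $R = \bigoplus_{i \in I} \ovl{R_i}$ as $R$-modules. This is the standard decomposition of a finite product ring: every $(r_i)_{i \in I}$ is uniquely the sum of the tuples having a single nonzero coordinate, and each such tuple lies in the corresponding $\ovl{R_i}$; the sum is direct because $\ovl{R_i} \cap \sum_{k \neq i} \ovl{R_k} = \{0\}$, the intersection forcing the $i$-th coordinate to vanish. So we have an internal direct sum decomposition of $R$ into the indecomposable submodules $\ovl{R_i}$, with $I$ finite. Since $R$ is a finite product of local (hence Artinian and Noetherian) rings, $R$ is itself Artinian and Noetherian as a module over itself, so the Krull-Remak-Schmidt theorem applies and this decomposition is the Krull-Remak-Schmidt decomposition, unique up to isomorphism and reordering of the summands.

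The main obstacle — really the only subtle point — is the claim that the $R$-submodules of $\ovl{R_j}$ coincide with the ideals of $R_j$, i.e., that passing from the $R_j$-module structure on $R_j$ to the $R$-module structure obtained by restriction along $R \twoheadrightarrow R_j$ does not create or destroy submodules, and hence preserves (in)decomposability. I would spell this out carefully: a subset of $\ovl{R_j}$ is an $R$-submodule iff it is closed under addition and under multiplication by every element of $R$, and since every element of $R_j$ arises as the $j$-th coordinate of some element of $R$ (e.g. the tuple that is that element in coordinate $j$ and $0$ elsewhere), this is the same as being closed under multiplication by every element of $R_j$ — that is, an ideal of $R_j$. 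Everything else is routine.
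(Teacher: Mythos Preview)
Your approach is essentially the same as the paper's: show each $\ovl{R_j}$ is an ideal of $R$, identify its $R$-module structure with that of $R_j$ over itself via the projection $R \to R_j$, and invoke Lemma~\ref{LEM:local_indecomposable}. One slip: local rings are \emph{not} in general Artinian or Noetherian, so the parenthetical in your final paragraph is false; but neither the theorem statement nor the paper's own proof asserts KRS uniqueness here---only that $R = \bigoplus_{i} \ovl{R_i}$ is a decomposition into indecomposables, which you have already established without that hypothesis.
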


\begin{proof}
Clearly $\ovl{R_i}$ is closed under addition and multiplication.
Also for any $r \in R$ and $v \in \ovl{R_i}$, one can see by definition of $\ovl{R_i}$ that $r \cdot v \in \ovl{R_i}$.
Hence $\ovl{R_i}$ is an ideal of $R$, and thus admits an $R$-module structure.

We show that $\ovl{R_i}$ is indecomposable.
Let $\varepsilon_i : R \to R_i$ be the projection map, which induces the additive group isomorphism $\varepsilon_i' := \varepsilon_i \mid_{\ovl{R_i}} \colon \ovl{R_i} \to R_i$.
Observe that we then immediately have $\varepsilon_i \times \varepsilon'_i \colon R \times \ovl{R_i} \to R_i \times R_i$.
Hence we can equivalently show that $R_i$ is indecomposable as module over itself.
This is true since $R_i$ is local (Lemma \ref{LEM:local_indecomposable}).
\end{proof}

\begin{corollary}
\label{COR:important_result}
Using the same notation as in Theorem \ref{THM:important_result}, let $R'$ be a ring, and assume we have a ring isomorphism $\varphi \colon R \to R'$.
Then $\varphi(\ovl{R_i})$ are indecomposable $R'$-submodules of $R'$, and $R'$ admits the Krull-Remak-Schmidt decomposition
$R = \bigoplus_{i \in I} \varphi(\ovl{R_i})$.
\end{corollary}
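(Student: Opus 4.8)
The plan is to transport the decomposition from Theorem~\ref{THM:important_result} across the ring isomorphism $\varphi$. The key observation is that $\varphi$, being a ring isomorphism, sends ideals to ideals, sends direct sum decompositions of the ring (as a module over itself) to direct sum decompositions of the target, and preserves indecomposability once one reconciles the change of scalars. Concretely, I would first note that each $\varphi(\ovl{R_i})$ is an ideal of $R'$: it is an additive subgroup since $\varphi$ is an additive isomorphism, and for $r' \in R'$ and $\varphi(v) \in \varphi(\ovl{R_i})$ with $v \in \ovl{R_i}$, writing $r' = \varphi(r)$ for a unique $r \in R$ gives $r' \cdot \varphi(v) = \varphi(r \cdot v) \in \varphi(\ovl{R_i})$ because $\ovl{R_i}$ is an ideal of $R$ by Theorem~\ref{THM:important_result}. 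Hence each $\varphi(\ovl{R_i})$ carries an $R'$-module structure.

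Next I would establish that $R' = \bigoplus_{i \in I} \varphi(\ovl{R_i})$ as $R'$-modules. Since $R = \bigoplus_{i \in I} \ovl{R_i}$ as abelian groups and $\varphi$ is an additive isomorphism, applying $\varphi$ yields $R' = \sum_{i \in I} \varphi(\ovl{R_i})$ with the sum direct (injectivity of $\varphi$ preserves the triviality of pairwise intersections, or more simply $\varphi$ is a group isomorphism onto $R'$ carrying an internal direct sum to an internal direct sum). This is simultaneously a decomposition of $R'$ as a module over itself, since we have already checked each summand is an $R'$-submodule.

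Finally, for indecomposability: if $\varphi(\ovl{R_i}) = N_1 \oplus N_2$ as $R'$-modules with $N_1, N_2$ proper, then pulling back along $\varphi$ — that is, viewing $\varphi^{-1}(N_1)$ and $\varphi^{-1}(N_2)$ as subsets of $\ovl{R_i}$ — produces a decomposition $\ovl{R_i} = \varphi^{-1}(N_1) \oplus \varphi^{-1}(N_2)$ into proper additive subgroups, each stable under multiplication by $R$ (since $\varphi$ intertwines the $R$- and $R'$-scalings via the identification $r \leftrightarrow \varphi(r)$). This contradicts the indecomposability of $\ovl{R_i}$ established in Theorem~\ref{THM:important_result}. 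Hence each $\varphi(\ovl{R_i})$ is indecomposable, and the displayed decomposition is the Krull-Remak-Schmidt decomposition of $R'$, with uniqueness inherited from the Krull-Remak-Schmidt theorem applied to $R'$ (which is Noetherian and Artinian, being isomorphic to $R$, a finite product of local—hence Artinian and Noetherian—rings).

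The proof is essentially routine transport of structure; the only point requiring a little care is the interplay between the two module structures under the isomorphism, i.e.\ making precise that an $R'$-submodule of $\varphi(\ovl{R_i})$ corresponds under $\varphi^{-1}$ to an $R$-submodule of $\ovl{R_i}$. I expect that to be the main (mild) obstacle, and I would handle it by spelling out once and for all that $\varphi$ induces an isomorphism of categories between $R$-modules and $R'$-modules via restriction of scalars along $\varphi^{-1}$, under which submodules, direct sums, and indecomposability all correspond. Alternatively, one can simply cite Theorem~\ref{THM:important_result} together with the trivial remark that an isomorphism of rings carries a Krull-Remak-Schmidt decomposition to a Krull-Remak-Schmidt decomposition.
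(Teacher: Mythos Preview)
Your proposal is correct and is essentially the standard transport-of-structure argument; the paper itself gives no proof for this corollary, treating it as immediate from Theorem~\ref{THM:important_result}. One small inaccuracy: your parenthetical that a finite product of local rings is automatically Artinian and Noetherian is false in general (local rings need not be), but this is irrelevant here since the corollary only asserts the existence of a decomposition into indecomposables, not its uniqueness.
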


\section{Circulant rings and group algebras}
\label{SECTION:Circulant_Rings}

We introduce circulant rings, which are the geometric equivalence of group algebras.
This is the same notion as the ones defined in \cite{subroto2024wedderburndecompositioncommutativesemisimple}.
We also cover indecomposable and semisimple group algebras.

\subsection{Geometric approach to group algebras}

\begin{definition}
Let $k$ be a field, and consider the $n$-tuple $\vm := (m_1, \ldots ,m_n) \in \Z_{>0}^n$.
The \textbf{cyclotomic ideal over $k$ with parameters $\vm$} is defined as the ideal $( X_1^{m_1} - 1, \ldots , X_n^{m_n} - 1 )$ in $A^n_k$, and is denoted $\ad_{\vm}$.

The \textbf{circulant ring} over $k$ with paramaters $\vm$ is defined as the coordinate ring $A^n_{k} / \ad_{\vm}$, which we denote by $\CR_{\vm / k}$.
\end{definition}

\begin{proposition}
\label{PROP:representatives}
The set $\mathcal{R}_{\vm / k}$ as defined in Definition \ref{DEF:polyrep} is a set of representatives of $\CR_{\vm / k}$, which we call the \textbf{standard set of representatives}.
\end{proposition}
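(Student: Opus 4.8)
The plan is to show that $\mathcal{R}_{\vm/k}$ is a complete set of coset representatives for the quotient $A^n_k / \ad_{\vm}$, which amounts to two things: every element of $A^n_k$ is congruent modulo $\ad_{\vm}$ to some element of $\mathcal{R}_{\vm/k}$, and no two distinct elements of $\mathcal{R}_{\vm/k}$ are congruent modulo $\ad_{\vm}$. Equivalently, I will prove that the composite $\mathcal{R}_{\vm/k} \hookrightarrow A^n_k \twoheadrightarrow \CR_{\vm/k}$ is a bijection, where $\mathcal{R}_{\vm/k}$ is regarded as the $k$-subspace of $A^n_k$ spanned by the monomials $\prod_i X_i^{e_i}$ with $0 \le e_i < m_i$.

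First I would handle surjectivity by a division/reduction argument. Given any $f \in A^n_k$, repeatedly use the relation $X_i^{m_i} \equiv 1 \pmod{\ad_{\vm}}$ to rewrite any monomial of $f$: whenever a monomial has $\deg_i$-exponent $e_i \ge m_i$, replace $X_i^{e_i}$ by $X_i^{e_i - m_i}$, which changes $f$ by an element of $\ad_{\vm}$ (namely a multiple of $X_i^{m_i}-1$). Since the exponents strictly decrease and are bounded below by $0$, this process terminates, producing $g \in \mathcal{R}_{\vm/k}$ with $f \equiv g \pmod{\ad_{\vm}}$. This is the routine half.

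The main obstacle is injectivity: showing that if $g \in \mathcal{R}_{\vm/k}$ lies in $\ad_{\vm}$ then $g = 0$. Here is where I would invoke Theorem \ref{THM:polyStructureTheorem}. An element of $\ad_{\vm} = (X_1^{m_1}-1, \ldots, X_n^{m_n}-1)$ is a sum $\sum_{i=1}^n h_i \cdot (X_i^{m_i} - 1)$ for some $h_i \in A^n_k$. The key point is to compare this with the direct-sum decomposition of a suitably large polynomial truncation space $\mathcal{R}_{t \cdot j} = \bigoplus_{0^n \le y \ll j} \mathcal{R}_t \Y_t^y$ from Statement 4 of that theorem, taken with $t = \vm$ and $j$ chosen componentwise large enough (depending on the degrees of $g$ and the $h_i$) so that everything in sight — $g$, each $h_i(X_i^{m_i}-1)$, and hence their sum — lies in $\mathcal{R}_{\vm \cdot j}$. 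In that decomposition, $\mathcal{R}_{\vm} \Y_{\vm}^{0^n} = \mathcal{R}_{\vm}$ is the $y = 0^n$ summand, while each $h_i(X_i^{m_i}-1) = h_i \Y_{\vm}^{\e_i}$, after being itself expanded in the decomposition, contributes only to summands $\mathcal{R}_{\vm}\Y_{\vm}^y$ with $y_i \ge 1$, i.e. with $y \ne 0^n$; so a sum of such terms has zero $\Y_{\vm}^{0^n}$-component. Since $g \in \mathcal{R}_{\vm}$ equals such a sum and the decomposition is direct, the $\Y_{\vm}^{0^n}$-component of both sides must agree, forcing $g = 0$.

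The technical care needed is in the bookkeeping: one must check that each $h_i \cdot (X_i^{m_i}-1)$, although not a priori in $\mathcal{R}_{\vm}\Y_{\vm}^{\e_i}$ (since $h_i$ need not lie in $\mathcal{R}_{\vm}$), can nonetheless be rewritten — using the same reduction as in the surjectivity step applied to $h_i$, or directly via Statement 4 — as an element of $\bigoplus_{y \ne 0^n} \mathcal{R}_{\vm}\Y_{\vm}^{y}$ inside a large enough $\mathcal{R}_{\vm \cdot j}$; writing $h_i = q_i + r_i$ with $r_i \in \mathcal{R}_{\vm}$ and $q_i \in \ad_{\vm}$ lets one induct on total degree if one prefers a cleaner formulation. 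Once the $\Y_{\vm}^{0^n}$-component of any element of $\ad_{\vm}$ is seen to vanish, injectivity and hence the proposition follow. I would also remark that this simultaneously reproves that $\CR_{\vm/k}$ is a free $k$-module of rank $\prod_i m_i$ with basis the images of the monomials in $\mathcal{R}_{\vm/k}$.
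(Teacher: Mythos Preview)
Your argument is correct. The surjectivity half is exactly the reduction that the paper's one-line proof (``Apply long division on the partial polynomial degrees'') has in mind. For injectivity, however, you take a genuinely different route: the paper intends the uniqueness to come for free from the uniqueness of the Euclidean remainder, applied iteratively in each variable (view $A^n_k = k[X_1,\ldots,X_{n-1}][X_n]$, divide by the monic polynomial $X_n^{m_n}-1$ to get a unique remainder with $\deg_n < m_n$, then recurse on the coefficients). Your approach instead invokes the direct-sum decomposition of Theorem~\ref{THM:polyStructureTheorem} to see that any element of $\ad_{\vm}$ has vanishing $\Y_{\vm}^{0^n}$-component. This is valid and non-circular (that theorem does not depend on the present proposition), and it has the virtue of tying the result to the $\Y$-calculus used later in the paper; but it is heavier than needed, and the bookkeeping you flag (decomposing each $h_i$ first and then multiplying by $\Y_{\vm}^{\e_i}$) is exactly the extra work that the straight division argument avoids.
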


\begin{proof}
Apply long division on the partial polynomial degrees.
\end{proof}

\begin{proposition}
Let $\vm := (m_1, \ldots ,m_n) \in \Z_{>0}^n$.
Then 
\begin{align*}
\V(\ad_{\vm}) = \mu_{r_p(m_1)} \times \ldots \times \mu_{r_p(m_n)},
\end{align*}
where $p$ is the characteristic of the field $k$. 
\end{proposition}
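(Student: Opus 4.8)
The plan is to compute the vanishing set coordinate by coordinate, reducing the multivariate problem to the one-variable case. Since $\ad_{\vm} = (X_1^{m_1} - 1, \ldots, X_n^{m_n} - 1)$ is generated by polynomials each involving a single variable, a point $\bx = (x_1, \ldots, x_n) \in \ovl{k}^n$ lies in $\V(\ad_{\vm})$ if and only if $x_i^{m_i} = 1$ for every $1 \leq i \leq n$. Hence $\V(\ad_{\vm})$ factors as a product $S_1 \times \cdots \times S_n$, where $S_i := \{ x \in \ovl{k} \mid x^{m_i} = 1 \}$ is the set of $m_i$-th roots of unity in $\ovl{k}$. So everything comes down to showing $S_i = \mu_{r_p(m_i)}$ for each $i$.

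So the core of the proof is the single-variable claim: for a positive integer $m$ and a field $k$ of characteristic $p$, the set of $m$-th roots of unity in $\ovl{k}$ equals $\mu_{r_p(m)}$, the set of $r_p(m)$-th roots of unity, where $r_p(m)$ is the largest divisor of $m$ coprime to $p$. I would split into two cases. If $p = 0$ (or $p \nmid m$), then $r_p(m) = m$ and there is nothing to prove. If $p \mid m$, write $m = p^e \cdot r$ with $r = r_p(m)$ and $e = v_p(m) \geq 1$. Then $X^m - 1 = (X^r)^{p^e} - 1 = (X^r - 1)^{p^e}$ in $\ovl{k}[X]$, using the Frobenius identity $(a - b)^{p^e} = a^{p^e} - b^{p^e}$ in characteristic $p$ (here with $b = 1$, so $1^{p^e} = 1$). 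Therefore $x^m = 1 \iff (x^r - 1)^{p^e} = 0 \iff x^r = 1$, which says exactly that the root set of $X^m - 1$ equals the root set of $X^r - 1$, i.e. $\mu_m = \mu_r = \mu_{r_p(m)}$ as subsets of $\ovl{k}$. (Note that the notation $\mu_m$ in the paper already denotes the set of $m$-th roots of unity in $\ovl{k}$, so this identity is about when that set, as a concrete subset, coincides with $\mu_{r}$.)

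Putting the pieces together: $\V(\ad_{\vm}) = S_1 \times \cdots \times S_n = \mu_{r_p(m_1)} \times \cdots \times \mu_{r_p(m_n)}$, which is the desired equality. I do not anticipate a serious obstacle here; the only point requiring a little care is the Frobenius factorization step and making sure the characteristic-zero (and $p \nmid m_i$) case is handled uniformly — this is cleanly absorbed by the definition $r_p(m) = m$ when $v_p(m) = 0$, so the case $p \mid m$ and the case $p \nmid m$ can be stated together with the same formula. If one wants to be thorough one could also remark that $\# \mu_{r_p(m)} = r_p(m)$ since $X^{r} - 1$ is separable when $p \nmid r$, but this is not needed for the set-theoretic identity being claimed.
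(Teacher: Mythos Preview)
Your proof is correct and follows the same approach as the paper: the paper's proof is the one-line observation that $\mu_{m_i} = \mu_{r_p(m_i)}$ in an algebraically closed field of characteristic $p$, and you have simply supplied the standard details (the coordinate-wise factorization of $\V(\ad_{\vm})$ and the Frobenius identity $X^{m} - 1 = (X^{r_p(m)} - 1)^{p^{v_p(m)}}$) that justify it.
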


\begin{proof}
This follows from the observation that $\mu_{m_i} = \mu_{r_p(m_i)}$ for all $1 \leq i \leq n$ in an algebraically closed field of characteristic $p$.
\end{proof}

Circulant rings are closely related to group algebras of finite abelian groups.

\begin{theorem}
\label{THM:group_alg-circulant_ring}
Consider the group $G = \Z_{m_1} \times \ldots \times \Z_{m_n}$ where the $m_i$ are positive integers.
Then the following map is an isomorphism of $k$-algebras:
\begin{align*}
\Phi_G \colon k[G] \to \CR_{\vm / k}, \ \sum_{g \in G}  f_g \cdot g \mapsto \sum_{g \in G} f_g \cdot \prod_{i=1}^n  X_i^{g_i},
\end{align*}
where $g := (g_1, \ldots, g_n) \in G$ and $\vm = (m_1, \ldots ,m_n)$.
\end{theorem}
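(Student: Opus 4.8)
The plan is to construct an explicit inverse map and verify that $\Phi_G$ is a well-defined ring homomorphism that is bijective. First I would check that $\Phi_G$ is well-defined as a map on the group algebra: since $\{g : g \in G\}$ is a $k$-basis of $k[G]$, the formula $g \mapsto \prod_{i=1}^n X_i^{g_i}$ extends uniquely to a $k$-linear map $k[G] \to A^n_k$, and composing with the quotient $A^n_k \to \CR_{\vm/k}$ gives a $k$-linear map into $\CR_{\vm/k}$; here I must be slightly careful that the exponents $g_i \in \{0, \ldots, m_i - 1\}$ are the integer representatives of residue classes, as fixed in the Notation section.

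Next I would verify that $\Phi_G$ respects multiplication. It suffices to check this on basis elements $g, h \in G$. On the group algebra side, $g \cdot h$ is the group element with coordinates $(g_i + h_i \bmod m_i)_i$; on the circulant ring side, $\Phi_G(g) \cdot \Phi_G(h) = \prod_i X_i^{g_i + h_i}$. The point is that modulo $\ad_{\vm}$ we have $X_i^{m_i} \equiv 1$, hence $X_i^{g_i + h_i} \equiv X_i^{(g_i + h_i) \bmod m_i}$ in $\CR_{\vm/k}$, so $\Phi_G(g)\Phi_G(h) = \Phi_G(gh)$ in the quotient. This is the one step where the relation defining the cyclotomic ideal is used, and it is also the conceptual heart of why circulant rings are the ``geometric equivalence'' of group algebras. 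It is routine but should be stated carefully.

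Finally I would establish bijectivity. By Proposition \ref{PROP:representatives}, the standard set of representatives $\mathcal{R}_{\vm/k}$ consists exactly of the polynomials with $\deg_i < m_i$ for all $i$, so the monomials $\prod_i X_i^{e_i}$ with $0 \le e_i < m_i$ form a $k$-basis of $\CR_{\vm/k}$. These are precisely the images $\Phi_G(g)$ of the basis elements $g \in G$, and this correspondence $g \leftrightarrow \prod_i X_i^{g_i}$ is a bijection between the two index sets. Hence $\Phi_G$ sends a $k$-basis of $k[G]$ bijectively onto a $k$-basis of $\CR_{\vm/k}$, so it is a $k$-linear isomorphism; combined with the multiplicativity and the fact that $\Phi_G$ sends the identity $e \in G$ to the monomial $1$, it is an isomorphism of $k$-algebras. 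The main obstacle is essentially bookkeeping: making the identification of exponents modulo $m_i$ precise and matching the two bases, rather than any substantial difficulty.
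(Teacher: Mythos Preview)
Your proof is correct and complete: you verify well-definedness, multiplicativity on basis elements via the relation $X_i^{m_i}\equiv 1$ in $\CR_{\vm/k}$, and bijectivity by matching the $k$-basis $\{g:g\in G\}$ of $k[G]$ with the monomial basis of $\CR_{\vm/k}$ supplied by Proposition~\ref{PROP:representatives}. The paper does not give its own argument for this theorem but simply cites \cite{subroto2024wedderburndecompositioncommutativesemisimple}; your direct verification is exactly the standard proof one would expect behind that citation, so there is nothing further to compare.
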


\begin{proof}
See \cite{subroto2024wedderburndecompositioncommutativesemisimple}.
\end{proof}

Theorem \ref{THM:group_alg-circulant_ring} applies to all finite abelian groups, due to the following fundamental result:

\begin{theorem}[\textbf{Fundamental theorem of finite abelian groups {\cite{lang2004algebra}}}]
\label{Thm-FundThmGroup}
Let $G$ be a finite abelian group.
Then there exists a unique tuple of integers $\vm := (m_1, \ldots ,m_n)$ such that $m_i \mid m_{i+1}$ for all $1 \leq i \leq n-1$, and 
\begin{align*}
G \cong \Z_{m_1} \times \ldots \times \Z_{m_n}.
\end{align*}
\end{theorem}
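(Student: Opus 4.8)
The plan is to prove existence and uniqueness separately, following the classical route through the primary decomposition.
\emph{Existence.} First I would reduce to the case of a $p$-group. For each prime $p$ dividing $\#G$, set $G_p := \{ g \in G : p^k g = 0 \text{ for some } k \geq 0 \}$, the $p$-primary component. Writing $\#G = \prod_p p^{a_p}$ and using that the integers $\#G / p^{a_p}$ are globally coprime, a B\'ezout argument produces a system of orthogonal idempotent endomorphisms of $G$ and yields $G \cong \bigoplus_p G_p$. Hence it suffices to decompose a finite abelian $p$-group into cyclic pieces.

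\emph{The $p$-group case.} Here I would induct on $\#G$. Pick $x \in G$ of maximal order, say $p^e$. The key claim is that $\langle x \rangle$ is a direct summand of $G$; granting this, $G \cong \langle x \rangle \oplus (G / \langle x \rangle)$, the quotient decomposes by induction, and collecting all cyclic factors and ordering their orders non-increasingly gives $G \cong \bigoplus_{j} \Z_{p^{e_j}}$ with $e_1 \geq e_2 \geq \cdots \geq 1$. Proving the claim is the main obstacle: given a cyclic subgroup $\langle \bar y \rangle \subseteq G / \langle x \rangle$ of order $p^d \leq p^e$, one must lift $\bar y$ to some $y \in G$ of order exactly $p^d$ with $\langle y \rangle \cap \langle x \rangle = 0$. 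Correcting a naive lift by a suitable multiple of $x$ is exactly where maximality of $e$ is used, and iterating over a generating set of $G / \langle x \rangle$ assembles the complement. Everything else in this step is bookkeeping.

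\emph{Reassembly into invariant factors.} For each prime $p$ I now have $G_p \cong \bigoplus_{j=1}^{n_p} \Z_{p^{e_{p,j}}}$ with $e_{p,1} \geq \cdots \geq e_{p,n_p} \geq 1$. Put $n := \max_p n_p$, extend by setting $e_{p,j} := 0$ for $n_p < j \leq n$, and define $m_i := \prod_p p^{e_{p, n+1-i}}$ for $1 \leq i \leq n$. Since $e_{p, n+1-i} \leq e_{p, n-i}$ for every $p$ we get $m_i \mid m_{i+1}$, and the Chinese remainder theorem gives $\Z_{m_i} \cong \bigoplus_p \Z_{p^{e_{p,n+1-i}}}$. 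Summing over $i$ and regrouping the summands by $p$ recovers $\bigoplus_p G_p \cong G$, establishing the existence of the asserted decomposition.

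\emph{Uniqueness.} Suppose $G \cong \bigoplus_{i=1}^n \Z_{m_i}$ with $m_1 \mid \cdots \mid m_n$ and $m_1 > 1$. For each prime $p$ and each $k \geq 1$, the abelian group $p^{k-1} G / p^k G$ is an $\F_p$-vector space of dimension $\#\{\, i : p^k \mid m_i \,\}$; these numbers are intrinsic to $G$, so they determine $v_p(m_i)$ for all $i$ and all $p$, hence every $m_i$, and in particular $n = \#\{ i : m_i > 1 \}$. Thus the tuple $(m_1, \ldots, m_n)$ attached to $G$ is unique, which completes the proof.
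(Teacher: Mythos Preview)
The paper does not supply its own proof of this statement; it simply cites Lang's \emph{Algebra} and moves on. Your argument is the standard classical route---primary decomposition, splitting off a maximal-order cyclic summand in each $p$-group by the lifting argument you sketch, reassembling into invariant factors via the Chinese remainder theorem, and reading off uniqueness from the Ulm-type invariants $\dim_{\F_p}(p^{k-1}G/p^kG)$---and it is correct as outlined. There is nothing in the paper to compare it against.
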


\begin{remark}
We call this unique tuple $\vm = (m_1, \ldots ,m_n)$ the \textbf{circulant parameters} of $G$.
\end{remark}

\subsection{Semisimple circulant rings}

We cover semisimple circulant rings and its simple components.

\begin{theorem}
\label{THM:Maschke}
A circulant ring $\CR_{\vm / k}$ is semisimple if and only if all entries of $\vm$ are coprime to the characteristic of $k$.
\end{theorem}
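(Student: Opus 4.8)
The plan is to reduce the statement to the classical Maschke theorem for group algebras via the isomorphism $\Phi_G \colon k[G] \to \CR_{\vm/k}$ from Theorem \ref{THM:group_alg-circulant_ring}, together with the fundamental theorem of finite abelian groups. First I would observe that by Theorem \ref{THM:group_alg-circulant_ring}, the ring $\CR_{\vm/k}$ is isomorphic as a $k$-algebra to $k[G]$ where $G = \Z_{m_1} \times \ldots \times \Z_{m_n}$, and semisimplicity of a ring is invariant under ring isomorphism. So it suffices to show that $k[G]$ is semisimple if and only if all $m_i$ are coprime to $p = \operatorname{char}(k)$. The classical Maschke theorem states that $k[G]$ is semisimple if and only if either $\operatorname{char}(k) = 0$, or $\operatorname{char}(k) = p > 0$ and $p \nmid \# G$; here $\# G = \prod_{i=1}^n m_i$.

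The remaining arithmetic point is to verify that $p \nmid \prod_{i=1}^n m_i$ if and only if $p \nmid m_i$ for every $i$, i.e., $v_p(m_i) = 0$ for all $i$; this is immediate since $v_p\!\left(\prod_i m_i\right) = \sum_i v_p(m_i)$ and each $v_p(m_i) \geq 0$. The case $\operatorname{char}(k) = 0$ fits the pattern vacuously, since then every positive integer is "coprime to the characteristic" under the natural reading, and $k[G]$ is always semisimple. One subtlety to address cleanly is that the statement is phrased for a given tuple $\vm \in \Z_{>0}^n$, not necessarily satisfying the divisibility chain $m_i \mid m_{i+1}$; but this causes no difficulty, because Theorem \ref{THM:group_alg-circulant_ring} applies to $G = \Z_{m_1} \times \ldots \times \Z_{m_n}$ for any positive tuple, and $\# G = \prod m_i$ regardless of the chain condition. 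Alternatively, one may invoke Theorem \ref{Thm-FundThmGroup} to rewrite $G$ in invariant-factor form, under which $p \nmid m_i$ for all $i$ is equivalent to $p \nmid \# G$, but this is not needed.

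I expect the main obstacle to be essentially bookkeeping rather than mathematical depth: one must be careful about which characteristics are allowed and state Maschke's theorem in a form that covers $\operatorname{char}(k) = 0$ as well as $\operatorname{char}(k) = p$, and one must make sure the phrase "coprime to the characteristic" is interpreted consistently (with the convention $r_0(m) = m$, or simply by treating characteristic $0$ separately). If the paper prefers not to cite Maschke as a black box, an alternative route is a direct argument: when $p \nmid \# G$, the averaging idempotent $\frac{1}{\# G}\sum_{g \in G} g$ exists and yields splittings of all short exact sequences, giving semisimplicity; conversely, when $p \mid \# G$, the augmentation ideal provides a submodule with no complement, so $k[G]$ is not semisimple. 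I would present the proof succinctly by citing Maschke's theorem and then recording the one-line $p$-adic valuation computation.

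\begin{proof}
By Theorem \ref{THM:group_alg-circulant_ring}, $\CR_{\vm/k} \cong k[G]$ as $k$-algebras, where $G = \Z_{m_1} \times \ldots \times \Z_{m_n}$. Since semisimplicity is preserved under ring isomorphism, $\CR_{\vm/k}$ is semisimple if and only if $k[G]$ is. By Maschke's theorem, $k[G]$ is semisimple if and only if $\operatorname{char}(k) = 0$, or $\operatorname{char}(k) = p > 0$ with $p \nmid \# G$. Now $\# G = \prod_{i=1}^n m_i$, and in characteristic $p$ we have $v_p\!\left( \prod_{i=1}^n m_i \right) = \sum_{i=1}^n v_p(m_i)$ with each $v_p(m_i) \geq 0$; hence $p \nmid \# G$ if and only if $v_p(m_i) = 0$ for every $1 \leq i \leq n$, i.e., every entry of $\vm$ is coprime to $p$. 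In characteristic $0$ this condition is vacuous and $k[G]$ is always semisimple. This proves the claim.
\end{proof}
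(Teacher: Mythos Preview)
Your proof is correct and follows exactly the paper's approach: the paper simply states that this is a direct consequence of Maschke's theorem, and you have spelled out that deduction in full (via the isomorphism $\CR_{\vm/k}\cong k[G]$ and the observation that $p\nmid\prod_i m_i$ iff $p\nmid m_i$ for all $i$). There is nothing to add.
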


\begin{proof}
This is a direct consequence of Maschke's Theorem \cite{james2001representations}.
\end{proof}

A consequence of this theorem is that all circulant rings over fields of characteristic $0$ are semisimple.

The Krull-Remak-Schmidt decomposition of semisimple circulant rings is also known as the Wedderburn decomposition, and they have been well-studied in \cite{subroto2024wedderburndecompositioncommutativesemisimple}.
In this section, we briefly cover the main results of that paper, which will play an important part in constructing the Krull-Remak-Schmidt decomposition for general circulant rings.

\begin{definition}
\label{DEF:geom_group_action}
Define $k_{\vm} := k(\mu_{m_1}, \ldots, \mu_{m_n})$.
For any $\sigma \in \Gal(k_{\vm} / k)$ and for any $\bx := (x_1, \ldots, x_n) \in \V(\ad_{\vm})$, we define $\sigma(\bx) := (\sigma(x_1), \ldots, \sigma(x_n))$.
This induces the group action
\begin{align*}
\alpha_{\vm / k} \colon \Gal(k_{\vm} / k) \times \V(\ad_{\vm}), \ (\sigma, \bx) \mapsto \sigma(\bx),
\end{align*}
which we call the \textbf{geometric group action}.

For $\bx \in \V(\ad_{\vm})$, we denote $\Orb(\bx)$ as the orbit of $\bx$ under $\alpha_{\vm / k}$.
\end{definition}

\begin{remark}
Let $p$ be the characteristic of $k$.
Observe that $\alpha_{\vm / k}$ and $\alpha_{r_p(\vm) / k}$ are exactly the same maps since $\mu_{m} = \mu_{r_p(m)}$ in a characteristic $p$ field for all integers $m$.
\end{remark}

\begin{theorem}[\textbf{Semisimple ring decomposition}]
\label{THM:semisimple_ring_decomposition}
Let $\CR_{\vm / k}$ be a semisimple circulant ring, and let $S$ be a set of representatives of the orbits of $\alpha_{\vm / k}$.
Then we have the ring isomorphism
\begin{align*}
\CR_{\vm / k} \to \bigoplus_{\bx \in S} k(\bx), \ f \mapsto (f(\bx))_{\bx \in S}.
\end{align*} 
\end{theorem}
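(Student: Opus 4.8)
The plan is to exhibit the map explicitly as a composite of well-understood isomorphisms, reducing to the structure of a single-variable cyclotomic quotient and then to the classical description of finite \'etale algebras over a field. First I would pass from $\CR_{\vm/k} = A^n_k / \ad_{\vm}$ to $k[X_1]/(X_1^{m_1}-1) \otimes_k \cdots \otimes_k k[X_n]/(X_n^{m_n}-1)$, which is just the observation that $A^n_k$ is the $n$-fold tensor power of $A^1_k$ and the cyclotomic ideal is generated by the pieces coming from each factor. By Theorem \ref{THM:Maschke}, semisimplicity forces each $m_i$ to be coprime to $p = \operatorname{char}(k)$, so each factor $k[X_i]/(X_i^{m_i}-1)$ is a separable (reduced) $k$-algebra; since a finite tensor product of finite separable $k$-algebras is again finite separable, $\CR_{\vm/k}$ is a finite \'etale $k$-algebra, hence a finite product of finite separable field extensions of $k$.

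The key step is then to identify those field factors and the evaluation maps cutting them out. For a finite \'etale $k$-algebra $B$, the set $\operatorname{Hom}_{k\text{-alg}}(B, \ovl{k})$ carries a $\Gal(\ovl{k}/k)$-action, and $B \cong \prod_{\text{orbits } O} k^{O}$ where $k^O$ is the fixed field of the stabilizer of any point of $O$; concretely the isomorphism sends $b$ to the tuple of its images under one chosen homomorphism per orbit. Here a $k$-algebra homomorphism $\CR_{\vm/k} \to \ovl{k}$ is exactly the evaluation $f \mapsto f(\bx)$ at a point $\bx \in \V(\ad_{\vm})$, because such a homomorphism is determined by the images $x_i$ of the $X_i$, and these must satisfy $x_i^{m_i} = 1$, i.e. $\bx \in \V(\ad_{\vm})$. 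The Galois action on these homomorphisms is precisely the geometric group action $\alpha_{\vm/k}$ of Definition \ref{DEF:geom_group_action}, after noting that $\Gal(\ovl{k}/k)$ acts on $\V(\ad_{\vm})$ through its finite quotient $\Gal(k_{\vm}/k)$, since all coordinates of all points lie in $k_{\vm}$. The fixed field of the stabilizer of $\bx$ is then exactly $k(\bx) = k(x_1, \ldots, x_n)$. Assembling these identifications gives the stated ring isomorphism $\CR_{\vm/k} \to \bigoplus_{\bx \in S} k(\bx)$, $f \mapsto (f(\bx))_{\bx \in S}$, where $S$ is a set of orbit representatives.

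I expect the main obstacle to be book-keeping rather than conceptual: one must check that the map $f \mapsto (f(\bx))_{\bx \in S}$ is well-defined on the quotient (immediate, since every generator of $\ad_{\vm}$ vanishes at every $\bx \in \V(\ad_{\vm})$) and is bijective. Injectivity and surjectivity can both be read off from a dimension count — using Proposition \ref{PROP:representatives} one has $\dim_k \CR_{\vm/k} = \prod_i m_i = \# \V(\ad_{\vm})$ (here semisimplicity, i.e. $\gcd(m_i,p)=1$, is exactly what guarantees $X^{m_i}-1$ has $m_i$ distinct roots), while $\sum_{\bx \in S} [k(\bx):k] = \sum_{\bx \in S} \# \Orb(\bx) = \# \V(\ad_{\vm})$ by the orbit--stabilizer theorem, so it suffices to prove injectivity, which follows because an $f$ in the kernel lies in $\bigcap_{\bx} \m_{\bx}$, and the maximal ideals $\m_{\bx}$ of the reduced Artinian ring $\CR_{\vm/k}$ are pairwise coprime with trivial intersection. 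Since this result is quoted from \cite{subroto2024wedderburndecompositioncommutativesemisimple}, in the paper I would simply cite that reference; the sketch above is how I would reconstruct the argument if needed.
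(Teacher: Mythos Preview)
Your proposal is correct, and you have already anticipated the paper's approach: the paper's proof of this theorem consists solely of the citation ``See \cite{subroto2024wedderburndecompositioncommutativesemisimple}'', with no argument given in the present text. Your reconstruction via the finite-\'etale description of $\CR_{\vm/k}$ and the dimension count is sound and goes well beyond what the paper itself records here.
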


\begin{proof}
See \cite{subroto2024wedderburndecompositioncommutativesemisimple}.
\end{proof}

The simple submodule of $\CR_{\vm / k}$ corresponding to $k(\bx)$, in view of Theorem \ref{THM:important_result}, is exactly the submodule 
\begin{align*}
\{ f \in \CR_{\vm / k} \mid f(\by) = 0 \text{ for all } \by \in S \setminus \{ \bx \} \}.
\end{align*}
Let us denote this submodule by $\CR_{\vm / k}(\bx)$.

\begin{theorem}[\textbf{Wedderburn decomposition}]
\label{THM:Wedderburn_Dec}
Let $\CR_{\vm / k}$ be a semisimple circulant ring.
Let $S$ be a set of representatives of the orbits of $\alpha_{\vm / k}$.
Then $\CR_{\vm / k}$ admits the Wedderburn decomposition $\bigoplus_{\bx \in S}  \CR_{\vm / k}(\bx)$, where $\CR_{\vm / k}(\bx)$ is a simple $\CR_{\vm / k}$-submodule as defined above, which is equivalent to the field extension $k(\bx)$ of $k$.
\end{theorem}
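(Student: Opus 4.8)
The plan is to assemble this statement directly from the machinery already in place, with Theorem~\ref{THM:semisimple_ring_decomposition} providing the ring isomorphism and Corollary~\ref{COR:important_result} converting it into a Krull-Remak-Schmidt statement. First I would invoke Theorem~\ref{THM:semisimple_ring_decomposition} to obtain the ring isomorphism $\varphi \colon \CR_{\vm / k} \to \bigoplus_{\bx \in S} k(\bx)$ given by $f \mapsto (f(\bx))_{\bx \in S}$. Each $k(\bx)$ is a finite field extension of $k$, hence a field, hence a local ring; so the family $(k(\bx))_{\bx \in S}$ is a finite family of local rings and the hypotheses of Theorem~\ref{THM:important_result} are met. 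Applying that theorem to the product ring $\bigoplus_{\bx \in S} k(\bx)$ yields its Krull-Remak-Schmidt decomposition $\bigoplus_{\bx \in S} \ovl{k(\bx)}$ into indecomposable submodules, where $\ovl{k(\bx)}$ denotes the copy of $k(\bx)$ sitting inside the product.

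Next I would transport this decomposition back across $\varphi$ using Corollary~\ref{COR:important_result}: the submodules $\varphi^{-1}(\ovl{k(\bx)})$ are indecomposable $\CR_{\vm / k}$-submodules and $\CR_{\vm / k} = \bigoplus_{\bx \in S} \varphi^{-1}(\ovl{k(\bx)})$ is the Krull-Remak-Schmidt decomposition of $\CR_{\vm/k}$. It then remains only to identify $\varphi^{-1}(\ovl{k(\bx)})$ with the submodule $\CR_{\vm / k}(\bx)$ defined just before the statement. This is immediate from unwinding the definitions: $\ovl{k(\bx)}$ is the set of tuples in $\bigoplus_{\by \in S} k(\by)$ that vanish in every coordinate $\by \neq \bx$, so its preimage under $\varphi$ is exactly $\{ f \in \CR_{\vm / k} \mid f(\by) = 0 \text{ for all } \by \in S \setminus \{\bx\} \}$, which is the definition of $\CR_{\vm/k}(\bx)$. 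Finally, since $\varphi$ restricts to a ring isomorphism $\CR_{\vm/k}(\bx) \to \ovl{k(\bx)} \cong k(\bx)$, the submodule $\CR_{\vm/k}(\bx)$ is equivalent to the field extension $k(\bx)$ of $k$, as claimed.

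I do not anticipate a genuine obstacle here: the theorem is essentially a repackaging of the semisimple ring decomposition through the general lemmas of Section~\ref{SECTION:Prerequisites_and_scope}. The only point requiring a little care is the bookkeeping of the isomorphism — making sure that ``the simple submodule corresponding to $k(\bx)$'' as defined in the text genuinely matches $\varphi^{-1}(\ovl{k(\bx)})$ rather than some other coordinate — but this is settled by the explicit formula $f \mapsto (f(\bx))_{\bx \in S}$ for $\varphi$. One might also remark that the indecomposable summands here are in fact simple (since each $k(\bx)$ is a field, hence a simple module over itself), so in the semisimple case the Krull-Remak-Schmidt decomposition coincides with the Wedderburn decomposition, which is the content of naming it as such.
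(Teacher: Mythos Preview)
Your proposal is correct and follows exactly the same approach as the paper, which simply states that the result is immediate from Theorems~\ref{THM:semisimple_ring_decomposition} and~\ref{THM:important_result}. Your write-up is in fact a careful unpacking of that one-line proof, including the explicit identification of $\varphi^{-1}(\ovl{k(\bx)})$ with $\CR_{\vm/k}(\bx)$, which the paper leaves implicit.
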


\begin{proof}
This is immediate from Theorems \ref{THM:semisimple_ring_decomposition} and \ref{THM:important_result}.
\end{proof}

\section{Krull-Remak-Schmidt decomposition for circulant rings}
\label{SECTION:Krull-Remak-Schmidt}

Let $k$ be a field of characteristic $p$.
For $p = 0$, all circulant rings over $k$ are semisimple, for which Theorem \ref{THM:Wedderburn_Dec} provides a full description of their simple components.
When $p$ is prime, Theorem \ref{THM:Wedderburn_Dec} does not necessarily apply, as $\CR_{\vm / k}$ is not always semisimple (see Theorem \ref{THM:Maschke}).
The main goal of this section is to present the Krull-Remak-Schmidt decomposition of circulant rings in general, where we do not need to assume semisimpleness.
Unless stated otherwise, we assume that $p$ is a prime number.

\subsection{Some modular representation theory}

Modular representation theory studies group algebras which do not satisfy the conditions of Maschke's theorem.
In other words, it studies group algebras where the order of the group is not coprime to the characteristic of the base field.
In this subsection, we discuss some results of modular representation theory which are useful for this paper.
These are based on the lecture notes by Peter Schneider \cite{schneider2012modular}.

\begin{definition}
Let $R$ be a ring.
The \textbf{Jacobson radical} $\Jac(R)$ of $R$ consists of all elements $r \in R$ such that $r \cdot M = 0$ for all simple $R$-modules $M$. 
\end{definition}

\begin{theorem}
\label{THM:Jac_criteria}
Let $R$ be a commutative ring.
We have the following equivalent criteria for $\Jac(R)$:
\begin{itemize}
\item[1.]	$\Jac(R)$ is the smallest submodule of $R$ such that $R / \Jac(R)$ is a semisimple $R$-module;
\item[2.]	$\Jac(R)$ is the intersection of all maximal ideals of $R$;
\item[3.]	$\Jac(R)$ is the largest nilpotent ideal of $R$ (an ideal $I$ is nilpotent if $I^n = 0$ for some integer $n$).
\end{itemize}
\end{theorem}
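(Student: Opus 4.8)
The plan is to reduce all three descriptions to the single characterisation ``$r \in \Jac(R)$ if and only if $r$ lies in every maximal ideal of $R$'' and read the others off from it. One should note first that conditions~1 and~3 are not equivalent to the definition for an arbitrary commutative ring --- for instance $R = \Z$ has $\Jac(\Z) = 0$, yet $\Z$ is not a semisimple $\Z$-module and has no largest nilpotent ideal --- so I would add throughout the hypothesis that $R$ is Artinian, which holds for every ring to which the theorem is applied here, each circulant ring $\CR_{\vm/k}$ being of finite $k$-dimension $\prod_i m_i$ by Proposition~\ref{PROP:representatives}.

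I would begin by recording that a simple module over the commutative ring $R$ is cyclic, hence isomorphic to $R/I$ for some ideal $I$, and is simple exactly when $I$ is maximal; thus the simple $R$-modules are precisely the residue fields $R/\m$. Consequently $r \in \Jac(R)$ if and only if $r\cdot(R/\m) = 0$ for every maximal ideal $\m$, that is, if and only if $r \in \bigcap_{\m}\m$, which is condition~2; in particular $\Jac(R)$ is an ideal. For condition~1, an Artinian ring has only finitely many maximal ideals $\m_1, \ldots, \m_r$, which are pairwise comaximal, so the Chinese Remainder Theorem gives $R/\Jac(R) \cong \prod_{i=1}^{r} R/\m_i$, a finite product of fields and hence a semisimple $R$-module; conversely, if $R/I$ is semisimple then it decomposes as a finite direct sum of simple modules $R/\m_i$, forcing $I = \bigcap_i \m_i \supseteq \Jac(R)$, so $\Jac(R)$ is the smallest ideal with semisimple quotient.

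For condition~3, every nilpotent ideal lies inside the nilradical, hence inside every prime ideal and a fortiori inside every maximal ideal, hence inside $\Jac(R)$ by condition~2; it remains to show that $\Jac(R)$ is itself nilpotent. Writing $J := \Jac(R)$, the descending chain $J \supseteq J^2 \supseteq \cdots$ stabilises in the Artinian ring $R$ at an ideal $N$ with $N^2 = N = JN$; if $N \neq 0$, then among the nonempty set of ideals $I$ with $NI \neq 0$ I would pick a minimal one, which must be principal, say $(c)$, and observe that $N(c)$ again belongs to this set and is contained in $(c)$, so $N(c) = (c)$ by minimality; hence $c = dc$ for some $d \in J$, and since $1-d$ is a unit (any $1-j$ with $j \in \Jac(R)$ is a unit, again by condition~2) this gives $c = 0$, a contradiction. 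Therefore $N = 0$ and $J$ is nilpotent. I expect the nilpotence of $\Jac(R)$ to be the main obstacle: it is the only step that genuinely uses the Artinian hypothesis and the minimal-counterexample/unit trick, whereas the remaining implications are short and formal. (Alternatively one may simply cite \cite{schneider2012modular}.)
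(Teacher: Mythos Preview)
The paper's own proof is a one-line citation to \cite[Theorem~6.1 on p.~658]{lang2004algebra}, so your full argument already goes well beyond what the paper provides, and your closing parenthetical (that one may simply cite a reference) is in fact exactly what the paper does. Your observation that the statement, as written for an arbitrary commutative ring, is false is correct and is a genuine imprecision in the paper; it is harmless here only because every ring to which the theorem is applied is Artinian, as you note.

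One small correction to your counterexample: $\Z$ \emph{does} have a largest nilpotent ideal, namely $(0)$, and since $\Jac(\Z)=0$ criterion~3 actually holds for $\Z$. A cleaner witness that criterion~3 fails without the Artinian hypothesis is a local domain such as $k[[X]]$ or $\Z_{(p)}$, where $\Jac(R)$ is the nonzero maximal ideal and is certainly not nilpotent. Your counterexample for criterion~1 (that $\Z$ is not a semisimple $\Z$-module) is fine as stated.

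The proof itself --- simples as residue fields for criterion~2, CRT over the finitely many maximal ideals for criterion~1, and the standard minimal-ideal/Nakayama argument for the nilpotence of $\Jac(R)$ in criterion~3 --- is the textbook route and is correct under the Artinian hypothesis you added.
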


\begin{proof}
See \cite[Theorem~6.1 on p.~658]{lang2004algebra}.
\end{proof}

The Jacobson radical is closely related to the Krull-Remak-Schmidt decomposition of an $R$-module $M$.
This is explained using projective modules.

\begin{definition}
An $R$-module $P$ is called a \textbf{projective module} if it is a summand of a free $R$-module.
\end{definition}

\begin{definition}
An $R$-module homomorphism $f : M \to M'$ is called \textbf{essentially surjective} if $f$ is surjective, but its restriction to any proper submodule of $M$ is not.
A \textbf{projective cover} of an $R$-module $M$ is a projective $R$-module $P$ together with an essential homomorphism $f : P \to M$.
\end{definition}

\begin{theorem}
We have the following statements regarding projective covers:
\begin{itemize}
\item	Every $R$-module $M$ has a projective cover, and it is unique up to isomorphism;
\item	If $P$ is a projective cover, then the quotient map $P \to P / \Jac(R)P$ is essential;
\item	The projective indecomposable modules are in bijection with the simple modules. 
In this bijection, the projective indecomposable $P$ corresponds with the simple module $M = P / \Jac(R)P$. 
The projective cover of $M$ is $P$.
\end{itemize}
\end{theorem}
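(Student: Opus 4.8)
The plan is to derive all three statements from two standard facts, available here because the ring $R$ in play is a finite-dimensional $k$-algebra, hence (left) Artinian, so that $\Jac(R)$ is nilpotent and $R/\Jac(R)$ is semisimple: Nakayama's lemma (for a finitely generated $R$-module $P$ the submodule $\Jac(R)P$ is superfluous, i.e.\ $N+\Jac(R)P=P$ implies $N=P$), and the lifting of idempotents along the surjection $\End_R(F)\to\End_R(F/\Jac(R)F)$, whose kernel is nilpotent. Recall that in Schneider's terminology an essential surjection is a surjection with superfluous kernel, and that a superfluous direct summand is $0$. To build a projective cover of a (finitely generated) module $M$, I would put $\overline{M}:=M/\Jac(R)M$, lift a minimal generating set to a surjection $\pi\colon F\to M$ with $F$ free of finite rank, decompose the induced map of semisimple $R/\Jac(R)$-modules as $\overline{F}:=F/\Jac(R)F=K\oplus K'$ with $K=\ker(\overline{F}\to\overline{M})$, lift the idempotent of $\End_R(\overline{F})$ projecting onto $K'$ to an idempotent of $\End_R(F)$, and take $P$ to be its image. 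Then $P$ is a direct summand of $F$, so $P/\Jac(R)P\cong K'\cong\overline{M}$; hence $\pi|_P\colon P\to M$ is onto by Nakayama, its kernel is contained in the superfluous submodule $\Jac(R)P$, and therefore $\pi|_P$ is essential, making $P$ a projective cover of $M$.

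For uniqueness, given two projective covers $f\colon P\to M$ and $f'\colon P'\to M$, I would lift $f'$ through $f$ to get $g\colon P'\to P$ with $fg=f'$; then $g(P')+\ker f=P$, so superfluousness of $\ker f$ forces $g$ onto, $g$ splits since $P$ is projective, and the resulting direct summand $\ker g\subseteq\ker f'$ of $P'$ is superfluous, hence $0$, so $g$ is an isomorphism. Statement 2 is then a direct application of Nakayama: a projective cover $P$ of any module is a finitely generated projective $R$-module, so $\Jac(R)P$ is superfluous in $P$, which is precisely the assertion that $P\to P/\Jac(R)P$ is essential; in particular $P$ is also the projective cover of $P/\Jac(R)P$.

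For Statement 3, observe that a simple module $S$ satisfies $\Jac(R)S=0$, so $S=S/\Jac(R)S$; its projective cover $P(S)$, built as above, satisfies $P(S)/\Jac(R)P(S)\cong S$, and it is indecomposable, since a splitting $P(S)=P_1\oplus P_2$ would split $S$ as $P_1/\Jac(R)P_1\oplus P_2/\Jac(R)P_2$, forcing some $P_i/\Jac(R)P_i=0$ and hence, by Nakayama, $P_i=0$. Conversely, for a projective indecomposable $P$ the module $P/\Jac(R)P$ is semisimple and nonzero, and it must be simple, since otherwise the idempotent splitting a nontrivial decomposition would lift to a splitting of $P$; moreover $P\to P/\Jac(R)P$ is essential by Statement 2, so $P$ is the projective cover of the simple module $P/\Jac(R)P$. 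Combining this with the uniqueness from Statement 1, the assignments $S\mapsto P(S)$ and $P\mapsto P/\Jac(R)P$ are mutually inverse up to isomorphism, which is the asserted bijection, and the last sentence of the statement is the identity $P(P/\Jac(R)P)\cong P$ just established.

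The only genuinely nontrivial ingredient is the lifting of idempotents modulo $\Jac(R)$, which is used both to cut out the projective cover as a direct summand of a free module and to show that a projective indecomposable has a simple head; this is exactly where Artinianness — through the nilpotence of $\Jac(R)$ — is needed, and is the step I would cite most carefully (e.g.\ \cite{schneider2012modular}). Everything else is formal bookkeeping with Nakayama's lemma and the universal property of projective modules.
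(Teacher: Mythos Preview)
The paper does not actually prove this theorem: it simply cites Schneider's lecture notes (Lemma~1.6.8 and Theorem~1.6.10 for the first statement, Proposition~1.7.4 for the second and third). Your argument is correct and is precisely the standard proof one finds in such references, so in substance you have unpacked what the paper merely cites; there is no genuine methodological difference.

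One minor caveat: your existence argument explicitly assumes $M$ is finitely generated (to apply Nakayama and to surject from a free module of finite rank), whereas the statement as written in the paper places no such hypothesis on $M$. Over an Artinian ring every module---not only the finitely generated ones---has a projective cover, but the general case requires the further observation that Artinian rings are perfect (Bass), which goes beyond the idempotent-lifting argument you outline. Since the paper only ever uses this result for finite-dimensional $k[G]$-modules, the restriction is harmless in context, but if you intend to match the stated generality you should either add the finite-generation hypothesis or note the extension to arbitrary modules via perfectness.
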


\begin{proof}
The first statement is equivalent to Lemma 1.6.8. and Theorem 1.6.10. of \cite{schneider2012modular}.
The second and third statements are an immediate consequence of \cite[Proposition~1.7.4.]{schneider2012modular}
\end{proof}

The simple components of the semisimple module $R / \Jac(R)$ provide a lot of information about the indecomposable components of $R$.
We use this observation to construct the indecomposable components of general circulant rings, as we already have a good understanding of semisimple circulant rings.

\subsection{Non-semisimple circulant rings}

We discuss some properties of circulant rings in general where the entries of $\vm$ are positive integers which are not necessarily coprime to $p$, given that $p$ is a prime number.

\begin{lemma}
Every circulant ring is Noetherian and Artinian.
\end{lemma}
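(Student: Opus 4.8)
The plan is to reduce the statement about an arbitrary circulant ring $\CR_{\vm/k}$ to the fact that $k[G]$ is Artinian and Noetherian for finite $G$, which is already recorded as Theorem~\ref{THM:groupalg_Noether}. First I would recall that by definition $\CR_{\vm/k} = A^n_k/\ad_{\vm}$, and that Theorem~\ref{THM:group_alg-circulant_ring} furnishes a $k$-algebra isomorphism $\Phi_G \colon k[G] \to \CR_{\vm/k}$ for the finite abelian group $G = \Z_{m_1}\times\cdots\times\Z_{m_n}$. Since $G$ is finite, Theorem~\ref{THM:groupalg_Noether} tells us $k[G]$ is both Artinian and Noetherian as a module over itself. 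A ring isomorphism transports the lattice of ideals (equivalently, of submodules of the ring viewed over itself) bijectively and order-preservingly, so ascending and descending chain conditions are preserved. Hence $\CR_{\vm/k}$ is Artinian and Noetherian as well, and this is exactly the claim.

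Concretely, the key steps in order are: (1) write $\CR_{\vm/k} \cong k[G]$ with $G$ finite abelian via Theorem~\ref{THM:group_alg-circulant_ring}; (2) invoke Theorem~\ref{THM:groupalg_Noether} to conclude $k[G]$ is Artinian and Noetherian; (3) observe that the Artinian/Noetherian property of a ring (as a module over itself) is an invariant of ring isomorphism, since such an isomorphism induces an order-isomorphism of the poset of two-sided ideals. One could alternatively give a more self-contained argument: $\CR_{\vm/k}$ is a quotient of $A^n_k = k[X_1,\ldots,X_n]$, which is Noetherian by the Hilbert basis theorem, so $\CR_{\vm/k}$ is Noetherian; and by Proposition~\ref{PROP:representatives} the standard set of representatives $\mathcal{R}_{\vm/k}$ is finite (it is spanned as a $k$-vector space by the monomials $\prod X_i^{a_i}$ with $0 \le a_i < m_i$), so $\CR_{\vm/k}$ is a finite-dimensional $k$-algebra, hence Artinian. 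I would probably present the isomorphism route as the main line and perhaps remark on the finite-dimensionality as the reason behind it.

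There is essentially no obstacle here: the statement is a direct corollary of results already established in the excerpt, and the only thing to be careful about is making explicit that ``Artinian and Noetherian'' for a ring in this paper means as a module over itself (left ideals = submodules), exactly the reading under which Theorem~\ref{THM:groupalg_Noether} and the Krull--Remak--Schmidt theorem are being applied. If anything, the mild subtlety is just spelling out that a ring isomorphism respects these chain conditions, which is immediate. So the proof will be short: cite Theorem~\ref{THM:group_alg-circulant_ring} for $\CR_{\vm/k} \cong k[G]$, cite Theorem~\ref{THM:groupalg_Noether} for the finiteness of $G$ giving the Artinian and Noetherian properties, and note that these pass along the isomorphism.

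\begin{proof}[Proof sketch]
By Theorem~\ref{THM:group_alg-circulant_ring} there is a $k$-algebra isomorphism $k[G] \cong \CR_{\vm/k}$ where $G = \Z_{m_1}\times\cdots\times\Z_{m_n}$ is a finite abelian group. Since $G$ is finite, Theorem~\ref{THM:groupalg_Noether} shows that $k[G]$ is Artinian and Noetherian as a module over itself. A ring isomorphism induces an order-isomorphism on the lattices of ideals, hence preserves both the ascending and descending chain conditions on ideals. Therefore $\CR_{\vm/k}$ is Artinian and Noetherian. (Equivalently, one may note that by Proposition~\ref{PROP:representatives} the ring $\CR_{\vm/k}$ is a finite-dimensional $k$-algebra, being spanned by the finitely many monomials $\prod_{i=1}^n X_i^{a_i}$ with $0 \le a_i < m_i$, so it is Artinian, and it is Noetherian as a quotient of the Noetherian ring $A^n_k$.)
\end{proof}
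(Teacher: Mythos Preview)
Your proof is correct and follows exactly the same approach as the paper, which simply says the lemma is immediate from Theorems~\ref{THM:groupalg_Noether} and~\ref{THM:group_alg-circulant_ring}. Your extra remark on finite-dimensionality via Proposition~\ref{PROP:representatives} is a valid alternative but not needed.
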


\begin{proof}
This is immediate from Theorems \ref{THM:groupalg_Noether} and \ref{THM:group_alg-circulant_ring}.
\end{proof}

\begin{lemma}
\label{LEM:3d_ring_iso}
We have $\ad_{\vm} \subseteq \ad_{r_p(\vm)} \subset A^n_k$ and $\CR_{\vm / k} / \ad_{r_p(\vm)} \cong \CR_{r_p(\vm) / k}$.
\end{lemma}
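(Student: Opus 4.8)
The plan is to reduce everything to the characteristic-$p$ identity $X^{m}-1 = (X^{r}-1)^{p^{e}}$ valid whenever $m = p^{e} r$. First I would write $m_i = p^{e_i} r_i$ with $e_i := v_p(m_i)$ and $r_i := r_p(m_i)$, so that $r_p(\vm) = (r_1, \ldots, r_n)$. Since $k$ has characteristic $p$, iterating the Frobenius identity $(a-b)^{p} = a^{p} - b^{p}$ exactly $e_i$ times gives
\[
(X_i^{r_i} - 1)^{p^{e_i}} = X_i^{p^{e_i} r_i} - 1 = X_i^{m_i} - 1 .
\]
Hence each generator $X_i^{m_i} - 1$ of $\ad_{\vm}$ lies in $\ad_{r_p(\vm)} = (X_1^{r_1}-1, \ldots, X_n^{r_n}-1)$, which proves $\ad_{\vm} \subseteq \ad_{r_p(\vm)}$.

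Next I would note that the inclusion $\ad_{r_p(\vm)} \subset A^n_k$ is strict: every generator $X_i^{r_i}-1$ vanishes at the point $(1,\ldots,1) \in \ovl{k}^n$, so $\ad_{r_p(\vm)}$ is contained in the maximal ideal $(X_1 - 1, \ldots, X_n - 1)$ and in particular does not contain the constant $1$. Equivalently, $\CR_{r_p(\vm)/k} = A^n_k/\ad_{r_p(\vm)}$ is a nonzero ring.

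Finally, for the isomorphism of quotient rings I would invoke the third isomorphism theorem. Because $\ad_{\vm} \subseteq \ad_{r_p(\vm)}$, the image of $\ad_{r_p(\vm)}$ under the quotient map $A^n_k \to \CR_{\vm / k} = A^n_k/\ad_{\vm}$ is the ideal $\ad_{r_p(\vm)}/\ad_{\vm}$, and
\[
\CR_{\vm / k} / \ad_{r_p(\vm)} = \bigl(A^n_k/\ad_{\vm}\bigr)\big/\bigl(\ad_{r_p(\vm)}/\ad_{\vm}\bigr) \cong A^n_k/\ad_{r_p(\vm)} = \CR_{r_p(\vm) / k} .
\]
No step here is a genuine obstacle; the only points requiring care are the bookkeeping around the mild abuse of notation "$\CR_{\vm / k} / \ad_{r_p(\vm)}$" — which is only meaningful thanks to the first inclusion — and the observation that the key polynomial identity really does use that $p$ is prime and $\operatorname{char} k = p$, as assumed throughout this section.
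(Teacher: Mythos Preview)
Your proof is correct and follows essentially the same route as the paper: the paper simply declares the inclusion $\ad_{\vm} \subseteq \ad_{r_p(\vm)}$ to be immediate and then applies the third isomorphism theorem verbatim, so your argument is a more explicit version of theirs. The only minor remark is that your Frobenius justification for the inclusion, while perfectly valid in characteristic $p$, is stronger than needed; the divisibility $X^{r_i}-1 \mid X^{m_i}-1$ already follows from $r_i \mid m_i$ over any coefficient ring.
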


\begin{proof}
The identity $\ad_{\vm} \subseteq \ad_{r_p(\vm)} \subset A^n_k$ is immediate. 
By the third isomorphism theorem for rings, we get 
\begin{align*}
\CR_{r_p(\vm) / k} := A^n_k / \ad_{r_p(\vm)} \cong (A^n_k / \ad_{\vm}) / (\ad_{r_p(\vm)} / \ad_{\vm}) := \CR_{\vm / k} / \ad_{r_p(\vm)}.
\end{align*}
\end{proof}

\begin{lemma}
We have the identity $\Jac(\CR_{\vm}) = \ad_{r_p(\vm)}$.
\end{lemma}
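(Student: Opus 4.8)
The plan is to identify $\Jac(\CR_{\vm/k})$ by combining the characterization of the Jacobson radical as the smallest submodule with semisimple quotient (Theorem \ref{THM:Jac_criteria}, criterion 1) with the structural facts already established about circulant rings. First I would observe that by Lemma \ref{LEM:3d_ring_iso} we have $\CR_{\vm/k}/\ad_{r_p(\vm)} \cong \CR_{r_p(\vm)/k}$, and since all entries of $r_p(\vm)$ are by construction coprime to $p$, Theorem \ref{THM:Maschke} tells us that $\CR_{r_p(\vm)/k}$ is semisimple. Hence $\ad_{r_p(\vm)}$ (viewed as an ideal of $\CR_{\vm/k}$, i.e.\ $\ad_{r_p(\vm)}/\ad_{\vm}$) is \emph{a} submodule with semisimple quotient, which gives the inclusion $\Jac(\CR_{\vm/k}) \subseteq \ad_{r_p(\vm)}$.

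For the reverse inclusion $\ad_{r_p(\vm)} \subseteq \Jac(\CR_{\vm/k})$, I would use criterion 3 of Theorem \ref{THM:Jac_criteria}: it suffices to show that $\ad_{r_p(\vm)}$ is a nilpotent ideal of $\CR_{\vm/k}$. The ideal $\ad_{r_p(\vm)}/\ad_{\vm}$ is generated by the images of $X_i^{r_p(m_i)} - 1$ for $1 \le i \le n$. Writing $m_i = p^{v_p(m_i)} r_p(m_i)$, in characteristic $p$ we have the Frobenius identity
\begin{align*}
X_i^{m_i} - 1 = X_i^{p^{v_p(m_i)} r_p(m_i)} - 1 = \left( X_i^{r_p(m_i)} - 1 \right)^{p^{v_p(m_i)}},
\end{align*}
so in $\CR_{\vm/k}$ the element $X_i^{r_p(m_i)} - 1$ raised to the power $p^{v_p(m_i)}$ vanishes. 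Taking $N$ large enough (e.g.\ $N = \sum_i (p^{v_p(m_i)} - 1) + 1$, or simply $N$ a sufficiently large power of $p$ dominating all $p^{v_p(m_i)}$), any product of $N$ generators contains some generator to a power at least $p^{v_p(m_i)}$ and hence is zero; so $\ad_{r_p(\vm)}^N = 0$ in $\CR_{\vm/k}$. This shows $\ad_{r_p(\vm)}$ is nilpotent, hence contained in the largest nilpotent ideal, which is $\Jac(\CR_{\vm/k})$. Combining the two inclusions yields the claimed identity.

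The only subtlety I anticipate is bookkeeping with the two levels of quotient: the statement conflates the ideal $\ad_{r_p(\vm)}$ of $A^n_k$ with its image in $\CR_{\vm/k} = A^n_k/\ad_{\vm}$, so the proof should make clear that ``$\Jac(\CR_{\vm}) = \ad_{r_p(\vm)}$'' means the ideal $\ad_{r_p(\vm)}/\ad_{\vm}$ of $\CR_{\vm/k}$. Modulo that clarification, both halves of the argument are short, and there is no real obstacle — the semisimplicity of the quotient (via Maschke) handles one inclusion and the Frobenius-based nilpotence handles the other.
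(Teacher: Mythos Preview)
Your proof is correct and follows essentially the same route as the paper: one inclusion via criterion~1 of Theorem~\ref{THM:Jac_criteria} using the semisimplicity of $\CR_{r_p(\vm)/k}$ (Lemma~\ref{LEM:3d_ring_iso} plus Maschke), and the reverse inclusion via criterion~3 by exhibiting nilpotence of $\ad_{r_p(\vm)}$ in $\CR_{\vm/k}$ through the Frobenius identity $(X_i^{r_p(m_i)}-1)^{p^{v_p(m_i)}}=X_i^{m_i}-1$. Your generator-and-pigeonhole argument for nilpotence is in fact slightly cleaner than the paper's multinomial expansion of a general element, and your remark about the conflation of $\ad_{r_p(\vm)}$ with its image modulo $\ad_{\vm}$ is a welcome clarification.
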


\begin{proof}
From Lemma \ref{LEM:3d_ring_iso} and Theorem \ref{THM:Wedderburn_Dec}, $A_{\vm} / \ad_{r_p(\vm)}$ is semisimple, hence $\Jac(\CR_{\vm / k}) \subseteq \ad_{r_p(\vm)}$ by the first criterion of Theorem \ref{THM:Jac_criteria}.

On the other hand, observe that all $f \in \ad_{r_p(\vm)}$ are nilpotent elements in $\CR_{\vm / k}$. 
To see this, note that $f = \sum_{i = 1}^n  f_i(X) \cdot (X_i^{m_i} - 1)$, for some polynomials $f_i \in A^n_k$.
One can find a large enough integer $t$ such that by Newton's binomial theorem, $f^t$ consists of components each divisible by $(X_j^{m_j} - 1)^{p^{v_p(m_j)}}$ for some $1 \leq j \leq n$.
Thus $f^t$ vanishes modulo $\ad_{\vm}$, which means that $f$ is indeed nilpotent in $\CR_{\vm / k}$.
This means that $\ad_{r_p(\vm)} \subseteq \Jac(\CR_{\vm / k})$ by the third criterion of Theorem \ref{THM:Jac_criteria}.
Hence $\Jac(\CR_{\vm / k}) = \ad_{r_p(\vm)}$.
\end{proof}

An interesting case is when all the entries of $m$ are powers of $p$.

\begin{lemma}
\label{LEMMA:local_condition}
Let $m$ be an $n$-tuple whose entries are a power of $p$.
Then $\CR_{\vm / k}$ is a local ring, with maximal ideal $\m := (X_1 - 1, X_2 - 1, \ldots, X_n - 1)$ and residue field $k$.
\end{lemma}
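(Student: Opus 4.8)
The plan is to show that $\CR_{\vm/k}$ has a unique maximal ideal, namely $\m := (X_1-1,\ldots,X_n-1)$, by exploiting the computation of the Jacobson radical from the previous lemma together with the fact that $r_p(m_i) = 1$ for every $i$ in this situation.

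First I would observe that when every $m_i$ is a power of $p$, we have $r_p(m_i) = 1$ for all $i$, so $r_p(\vm) = (1,\ldots,1)$. Consequently $\ad_{r_p(\vm)} = (X_1 - 1,\ldots,X_n-1) = \m$ as an ideal of $A^n_k$ (and hence its image in $\CR_{\vm/k}$). By the preceding lemma, $\Jac(\CR_{\vm/k}) = \ad_{r_p(\vm)} = \m$. Next I would identify the quotient: by Lemma \ref{LEM:3d_ring_iso}, $\CR_{\vm/k}/\m = \CR_{\vm/k}/\ad_{r_p(\vm)} \cong \CR_{r_p(\vm)/k} = \CR_{(1,\ldots,1)/k} = A^n_k/(X_1-1,\ldots,X_n-1) \cong k$. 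In particular $\CR_{\vm/k}/\m$ is a field, so $\m$ is a maximal ideal of $\CR_{\vm/k}$ with residue field $k$.

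To conclude that $\CR_{\vm/k}$ is local, I would use criterion 2 of Theorem \ref{THM:Jac_criteria}: $\Jac(\CR_{\vm/k})$ is the intersection of all maximal ideals. Since $\Jac(\CR_{\vm/k}) = \m$ is itself a maximal ideal, and the intersection of all maximal ideals is contained in each one, any maximal ideal $\m'$ satisfies $\m = \Jac(\CR_{\vm/k}) \subseteq \m'$; maximality of $\m$ then forces $\m' = \m$. Hence $\m$ is the unique maximal ideal, so $\CR_{\vm/k}$ is local with maximal ideal $\m$ and residue field $k$.

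I do not expect a serious obstacle here, since all the heavy lifting (the nilpotency argument showing $\ad_{r_p(\vm)} \subseteq \Jac$, and the semisimplicity giving the reverse inclusion) is already packaged in the earlier lemma. The only point requiring mild care is the bookkeeping that $r_p(\vm) = (1,\ldots,1)$ really does yield $\CR_{r_p(\vm)/k} \cong k$ — this is just $k[X_1,\ldots,X_n]/(X_1-1,\ldots,X_n-1) \cong k$ via evaluation at $(1,\ldots,1)$ — and that the image of $\ad_{r_p(\vm)}$ in $\CR_{\vm/k}$ is exactly the ideal $\m$ named in the statement.
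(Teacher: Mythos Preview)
Your argument is correct, but it is genuinely different from the paper's. The paper gives a one-line proof: it observes that $\CR_{\vm/k}$ is isomorphic to the group algebra of a finite abelian $p$-group and then cites \cite[Proposition~2.9.7]{schneider2012modular} for the fact that such group algebras are local. You instead stay entirely within the paper's own machinery, using the computation $\Jac(\CR_{\vm/k}) = \ad_{r_p(\vm)}$ from the preceding lemma together with criterion~2 of Theorem~\ref{THM:Jac_criteria}. Your route has the advantage of being self-contained and of directly exhibiting the maximal ideal and residue field (the cited external result only asserts locality, so strictly speaking the paper's proof leaves the identification of $\m$ and of $\CR_{\vm/k}/\m \cong k$ implicit). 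The paper's route is shorter and ties the statement to a standard fact in modular representation theory. Both are valid; yours arguably fits the surrounding development more organically.
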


\begin{proof}
When $m$ is an $n$-tuple whose entries are powers of $p$, then $\CR_{\vm / k}$ is isomorphic to a group algebra over a commutative $p$-group, which is local by \cite[Proposition~2.9.7.]{schneider2012modular}.
\end{proof}

\begin{remark}
The converse statement is also true, which we prove in Corollary \ref{COR:non-locality}.
\end{remark}

\subsection{Field embedding}

Let 
\begin{align*}
Q : \CR_{\vm / k} \to \CR_{\vm / k} / \Jac(\CR_{\vm / k}) \cong \CR_{r_p(\vm) / k},
\end{align*}
be the quotient map.
From Theorem \ref{THM:semisimple_ring_decomposition}, we know that $\CR_{r_p(\vm) / k}$ is isomorphic to the direct product of fields $\bigoplus_{\bx \in S} k(\bx)$, where $S$ is a set of representatives of the group action $\alpha_{\vm / k}$ in Defintion \ref{DEF:geom_group_action}.

The goal of this subsection is to construct injective group homomorphisms $k(\bx) \to \CR_{\vm / k}$ for all $\bx \in S$ which preserve multiplication, and are also $k$-linear maps.
Observe that by Theorem \ref{THM:Wedderburn_Dec}, the field $k(\bx)$ corresponds to the set
\begin{align*}
\{ f \in \mathcal{R}_{r_p(\vm)} \mid f(\by) = 0 \text{ for all } \by \in S \setminus \{ \bx \} \}.
\end{align*}
In words, the elements in $k(\bx)$ can be identified by the polynomials in $\CR_{\vm / k}$ with partial degrees strictly smaller than $r_p(\vm)$, such that it vanishes over all representatives in $S$ other than $\bx$.
We use this identification of elements of $k(\bx)$ to construct such group homomorphisms.

\begin{lemma}
\label{LEMMA:power_p_vanish}
Let $t \geq \sum_{i=1}^n v_p(m_i)$.
Then for any $f(X_1, \ldots, X_n) \in \ad_{r_p(\vm)}$, we have $f(X_1^{p^t}, \ldots, X_n^{p^t}) = 0$ in $\CR_{\vm / k}$.
\end{lemma}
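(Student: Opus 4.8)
The plan is to reduce the statement to checking it on the ideal generators $X_i^{r_p(m_i)} - 1$ of $\ad_{r_p(\vm)}$, and then to exploit the Frobenius-type identity $(X^a - 1)^{p^s} = X^{ap^s} - 1$ available in characteristic $p$. First I would note that $\ad_{r_p(\vm)} = (X_1^{r_p(m_1)}-1, \ldots, X_n^{r_p(m_n)}-1)$, so a general element of $\ad_{r_p(\vm)}$ has the form $f = \sum_{i=1}^n g_i \cdot (X_i^{r_p(m_i)} - 1)$ for some $g_i \in A^n_k$. Substituting $X_j \mapsto X_j^{p^t}$ is a $k$-algebra endomorphism of $A^n_k$, so it commutes with this sum and product; hence $f(X_1^{p^t}, \ldots, X_n^{p^t}) = \sum_{i=1}^n g_i(X_1^{p^t}, \ldots) \cdot \left( X_i^{r_p(m_i) p^t} - 1 \right)$. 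It therefore suffices to show that each $X_i^{r_p(m_i) p^t} - 1$ vanishes in $\CR_{\vm / k}$, i.e.\ lies in $\ad_{\vm}$.

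Next I would use that, since $t \geq \sum_{l=1}^n v_p(m_l) \geq v_p(m_i)$, we may write $p^t = p^{v_p(m_i)} \cdot p^{t - v_p(m_i)}$ with $t - v_p(m_i) \geq 0$. In characteristic $p$, the $p^{v_p(m_i)}$-th power map is additive, so
\begin{align*}
\left( X_i^{r_p(m_i) p^{t - v_p(m_i)}} - 1 \right)^{p^{v_p(m_i)}} = X_i^{r_p(m_i) p^{v_p(m_i)} p^{t - v_p(m_i)}} - 1 = X_i^{r_p(m_i) p^{v_p(m_i)} \, p^{t-v_p(m_i)}} - 1.
\end{align*}
Since $r_p(m_i) p^{v_p(m_i)} = m_i$, the right-hand side equals $X_i^{m_i \cdot p^{t - v_p(m_i)}} - 1 = (X_i^{m_i})^{p^{t-v_p(m_i)}} - 1$, which is divisible by $X_i^{m_i} - 1$ in $A^n_k$ and hence lies in $\ad_{\vm}$. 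So the displayed $p^{v_p(m_i)}$-th power of $X_i^{r_p(m_i) p^{t-v_p(m_i)}} - 1$ is zero in $\CR_{\vm/k}$; but $\CR_{\vm/k}$ need not be reduced, so this only shows nilpotence, not vanishing, of $X_i^{r_p(m_i) p^{t-v_p(m_i)}} - 1$ itself.

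To close the gap I would instead argue directly that $X_i^{r_p(m_i) p^t} - 1 \in \ad_{\vm}$ without extracting roots: factor $X_i^{r_p(m_i)p^t} - 1$ as $(X_i^{m_i} - 1) \cdot h$ for a suitable $h \in A^n_k$, which is possible precisely when $m_i \mid r_p(m_i) p^t$, i.e.\ when $p^{v_p(m_i)} \mid p^t$ after dividing both sides by $\gcd(m_i, r_p(m_i))=r_p(m_i)$ --- and this holds since $t \geq v_p(m_i)$. Concretely $r_p(m_i) p^t = m_i \cdot p^{t - v_p(m_i)}$, so $X_i^{r_p(m_i)p^t} - 1 = (X_i^{m_i})^{p^{t-v_p(m_i)}} - 1$ is divisible by $X_i^{m_i} - 1$, hence lies in $\ad_{\vm}$ and vanishes in $\CR_{\vm/k}$. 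Combining this with the first paragraph gives $f(X_1^{p^t}, \ldots, X_n^{p^t}) = 0$ in $\CR_{\vm/k}$. The only mild subtlety is bookkeeping with the valuations $v_p(m_i)$ and the fact that the hypothesis $t \geq \sum_l v_p(m_l)$ is more than enough --- $t \geq \max_i v_p(m_i)$ already suffices --- so no genuine obstacle arises; the Frobenius paragraph above can be kept as motivation or dropped.
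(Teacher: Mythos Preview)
Your argument is correct and matches the paper's proof: both reduce to showing that each $X_i^{r_p(m_i)p^t}-1$ lies in $\ad_{\vm}$, and both use $r_p(m_i)p^t = m_i \cdot p^{\,t-v_p(m_i)}$. The paper finishes via Frobenius, writing $X_i^{r_p(m_i)p^t}-1 = (X_i^{m_i}-1)^{p^{\,t-v_p(m_i)}}$, while your third paragraph uses the elementary divisibility $X_i^{m_i}-1 \mid (X_i^{m_i})^{p^{\,t-v_p(m_i)}}-1$; either works, and your observation that $t \geq \max_i v_p(m_i)$ already suffices is correct. Incidentally, your second paragraph already proved what you needed --- you showed there that $X_i^{r_p(m_i)p^t}-1 \in \ad_{\vm}$, and the worry about non-reducedness concerns only $X_i^{r_p(m_i)p^{\,t-v_p(m_i)}}-1$, which is not the quantity that has to vanish.
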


\begin{proof}
Since $f \in \ad_{r_p(\vm)}$, $f$ is of the form $f = \sum_{i=1}^n f_i(X_1, \ldots, X_n) \cdot \left( X_i^{r_p(m_i)} - 1 \right)$ where $f_i \in \CR_{\vm / k}$.
Observe that
\begin{align*}
f(X_1^{p^t}, \ldots, X_n^{p^t}) 
&= \sum_{i=1}^n f_i(X_1^{p^t}, \ldots, X_n^{p^t}) \cdot \left( X_i^{p^t \cdot r_p(m_i)} - 1 \right) \\
&= \sum_{i=1}^n f_i(X_1^{p^t}, \ldots, X_n^{p^t}) \cdot \left( X_i^{r_p(m_i) \cdot p^{v_p(m_i)}} - 1 \right)^{p^{t - v_p(m_i)}} \\
&= 0,
\end{align*}
since the $X_i^{r_p(m_i) \cdot p^{v_p(m_i)}} - 1 = X_i^{m_i} - 1$ vanish in the ideal $\ad_{\vm}$.
\end{proof}

\begin{lemma}
\label{LEM:t_conditions}
There exists an integer $t > 0$ such that $p^t \equiv 1 \bmod r_p(m_i)$ for all $1 \leq i \leq n$, and such that $t \geq \sum_{i=1}^n v_p(m_i)$.
\end{lemma}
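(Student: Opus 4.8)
The plan is to handle the two conditions separately and then combine them. First I would focus on the congruence condition $p^t \equiv 1 \bmod r_p(m_i)$ for all $i$. For each $i$, since $\gcd(p, r_p(m_i)) = 1$ by definition of $r_p$, the residue $p$ is a unit in $\Z_{r_p(m_i)}$, so it has a finite multiplicative order $o_i := \ord_{r_p(m_i)}(p)$, meaning $p^{o_i} \equiv 1 \bmod r_p(m_i)$. Then any common multiple of the $o_i$ works simultaneously: set $o := \lcm(o_1, \ldots, o_n)$, so that $p^{o} \equiv 1 \bmod r_p(m_i)$ for every $1 \leq i \leq n$. More generally, $p^{t} \equiv 1 \bmod r_p(m_i)$ for all $i$ whenever $o \mid t$, since $p^{ko} = (p^o)^k \equiv 1$.

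The second condition, $t \geq \sum_{i=1}^n v_p(m_i)$, is a lower bound, and it is compatible with the first because the set of admissible $t$ for the congruence is $\{ k o : k \in \Z_{>0} \}$, which is unbounded above. So I would simply choose $k$ large enough that $k o \geq \sum_{i=1}^n v_p(m_i)$ — explicitly, $t := o \cdot \max\!\left( 1, \left\lceil \tfrac{1}{o} \sum_{i=1}^n v_p(m_i) \right\rceil \right)$ works, or one can just say: take any multiple of $o$ that is at least $\sum_{i=1}^n v_p(m_i)$, which exists since multiples of $o$ are cofinal in $\Z_{>0}$. This $t$ is positive and satisfies both requirements.

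There is really no substantial obstacle here; the lemma is essentially a bookkeeping statement combining elementary facts about multiplicative orders modulo coprime integers with the observation that an arithmetic progression is unbounded. The only thing to be mildly careful about is the degenerate possibility that some $r_p(m_i) = 1$ (when $m_i$ is a power of $p$), but then the congruence modulo $1$ is vacuous, so $o_i = 1$ causes no trouble and the argument goes through unchanged. I would present it in three short sentences: define the orders, take their lcm, then bump up to clear the lower bound.

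\begin{proof}
For each $1 \leq i \leq n$, the integer $r_p(m_i)$ is coprime to $p$, so $p$ represents a unit in $\Z_{r_p(m_i)}$ and hence has a finite multiplicative order $o_i := \ord_{r_p(m_i)}(p)$; thus $p^{t} \equiv 1 \bmod r_p(m_i)$ whenever $o_i \mid t$. (When $r_p(m_i) = 1$ this is vacuous and $o_i = 1$.) Let $o := \lcm(o_1, \ldots, o_n)$. Then for any positive multiple $t$ of $o$, we have $o_i \mid t$ for all $i$, so $p^{t} \equiv 1 \bmod r_p(m_i)$ for all $1 \leq i \leq n$. Since the positive multiples of $o$ are unbounded, we may pick such a $t$ with $t \geq \sum_{i=1}^n v_p(m_i)$; this $t$ satisfies both required conditions.
\end{proof}
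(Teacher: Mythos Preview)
Your proof is correct and follows essentially the same approach as the paper: both use that $p$ has finite order modulo each $r_p(m_i)$, take a common multiple (the paper uses the product, you use the $\lcm$), and then pass to a large enough multiple to clear the lower bound. Your version is marginally tidier in using the $\lcm$ and in noting the degenerate case $r_p(m_i)=1$, but the argument is the same.
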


\begin{proof}
Let us fix some $1 \leq i \leq n$.
Since $p$ is coprime to $r_p(m_i)$, there exists a positive integer $t_i$ such that $p^{t_i} \equiv 1 \bmod r_p(m_i)$.
Observe that every this is true for every multiple $t_i$.
As a result, we have that $p^{\prod_{i=1}^n t_i} \equiv 1 \bmod r_p(m_i)$ for all $1 \leq i \leq n$.
Now simply find an integer $c > 0$ such that $c \cdot \prod_{i=1}^n t_i > \sum_{i=1}^n v_p(m_i)$, and set $t := c \cdot \prod_{i=1}^n t_i$.
Then $t$ satisfies all the conditions of the lemma.
\end{proof}

\begin{proposition}
\label{PROP:field_embed}
Let $t$ be some fixed integer satisfying the conditions in Lemma \ref{LEM:t_conditions}.
Then the map 
\begin{align*}
\iota_{\bx} \colon k(\bx) \to \CR_{\vm / k}, \ f(X_1, \ldots, X_n) \mapsto f \left( X_1^{p^t}, \ldots, X_n^{p^t} \right),
\end{align*}
is an injective $k$-linear map which preserves multiplication.
Moreover, $Q \circ \iota_{\bx} : k(\bx) \to k(\bx)$ is the identity map.
\end{proposition}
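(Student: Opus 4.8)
The plan is to realise $\iota_{\bx}$ as the restriction of a single $k$-algebra homomorphism obtained by descent, after which all of the assertions become formal. First I would consider the $k$-algebra homomorphism $\phi \colon A^n_k \to \CR_{\vm/k}$ determined by $X_i \mapsto X_i^{p^t}$; concretely $\phi(f)$ is the class of $f(X_1^{p^t}, \ldots, X_n^{p^t})$ in $\CR_{\vm/k}$. The one substantive point is that $\phi$ annihilates $\ad_{r_p(\vm)}$, and this is exactly Lemma \ref{LEMMA:power_p_vanish}, which applies because $t$ was chosen via Lemma \ref{LEM:t_conditions} so that $t \geq \sum_{i=1}^n v_p(m_i)$. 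Hence $\phi$ factors through a $k$-algebra homomorphism $\ovl{\phi} \colon \CR_{r_p(\vm)/k} \to \CR_{\vm/k}$, using $\CR_{r_p(\vm)/k} = A^n_k/\ad_{r_p(\vm)}$.

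Next I would identify $\iota_{\bx}$ with $\ovl{\phi}$ restricted to the subset $k(\bx) \subseteq \CR_{r_p(\vm)/k}$: under the identification of $k(\bx)$ with polynomial representatives in $\mathcal{R}_{r_p(\vm)}$, both maps send (the class of) a representative $f$ to the class of $f(X_1^{p^t}, \ldots, X_n^{p^t})$ in $\CR_{\vm/k}$. Since $\ovl{\phi}$ is additive, multiplicative and $k$-linear, so is $\iota_{\bx}$, which in particular preserves multiplication. For the remaining assertions I would compute the $k$-algebra endomorphism $Q \circ \ovl{\phi}$ of $\CR_{r_p(\vm)/k}$: it sends the class of $X_i$ to the class of $X_i^{p^t}$, and writing $p^t = 1 + c_i \, r_p(m_i)$ with $c_i \in \Z_{\geq 0}$ (possible since $p^t \equiv 1 \bmod r_p(m_i)$ by the choice of $t$) together with $X_i^{r_p(m_i)} - 1 \in \ad_{r_p(\vm)}$ shows that this class equals the class of $X_i$. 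A unital $k$-algebra endomorphism of $\CR_{r_p(\vm)/k}$ fixing each generator is the identity, so $Q \circ \ovl{\phi} = \mathrm{id}$; restricting to $k(\bx)$ gives $Q \circ \iota_{\bx} = \mathrm{id}_{k(\bx)}$. Since $\mathrm{id}_{k(\bx)}$ is injective, so is $\iota_{\bx}$, which completes the proof.

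The only genuine obstacle is the descent step, i.e., the inclusion $\ad_{r_p(\vm)} \subseteq \ker \phi$; this is where the bound $t \geq \sum_i v_p(m_i)$ enters (through Lemma \ref{LEMMA:power_p_vanish}), and it is what makes $\iota_{\bx}$ an honest section of $Q$ rather than merely a substitution on representatives. Everything afterwards is the formal behaviour of $k$-algebra homomorphisms, with the congruence conditions on $t$ used only in the trivial check that $Q \circ \ovl{\phi}$ fixes the generators $X_i$.
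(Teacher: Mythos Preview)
Your argument is correct and follows essentially the same route as the paper: both rely on Lemma \ref{LEMMA:power_p_vanish} to see that the substitution $X_i \mapsto X_i^{p^t}$ kills $\ad_{r_p(\vm)}$, and both deduce $Q \circ \iota_{\bx} = \mathrm{id}$ from the congruence $p^t \equiv 1 \bmod r_p(m_i)$. The only cosmetic difference is that you package multiplicativity by factoring a $k$-algebra map through the quotient and check $Q \circ \ovl{\phi} = \mathrm{id}$ on the generators $X_i$, whereas the paper verifies multiplicativity by hand on representatives and checks $Q \circ \iota_{\bx} = \mathrm{id}$ by evaluating at the points $\by \in \V(\ad_{\vm})$.
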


\begin{proof}
To ease notation in this proof, we denote $(X_1^c, \ldots, X_n^c)$ by $X^c$ for any positive integer $c$.
Observe that $k$-linearity of $\iota_{\bx}$ is immediate, since we simply replace $X$ by $X^{p^t}$.

Let $h \in k(\bx)$ such that $h$ equals $f \cdot g$ within the field $k(\bx)$.
This means that in $\CR_{\vm / k}(\bx)$, there exists a polynomial $h^* \in \ad_{r_p(\vm)}$ such that $h = f \cdot g +  h^*$.
Now we have
\begin{align*}
\iota_{\bx}(f \cdot g) 
= \iota_{\bx}(h) 
= h(X^{p^t}) 
= (f \cdot g)(X^{p^t}) +  h^*(X^{p^t})
= f(X^{p^t}) \cdot g(X^{p^t}) + 0 
= \iota_{\bx}(f) \cdot \iota_{\bx}(g),
\end{align*}
here the fourth equation is due to Lemma \ref{LEMMA:power_p_vanish}, and also due to the fact that for polymomials the equation $(f \cdot g)(X^c) = f(X^c) \cdot g(X^c)$ holds up for any positive integer $c$.
Hence $\iota_{\bx}$ preserves multiplication.

Observe that for any $\by \in \V(\ad_{\vm})$, we have that $y_i^{p^t} = y_i$ since $p^t \equiv 1 \bmod m_i$ for all $1 \leq i \leq n$.
As a result, we have for every $f \in k(\bx)$ and for every $\by \in S$ that $\iota_{\bx}(f)(\by) = f(\by)$.
This implies that $Q(\iota_{\bx}(f)) = f$, which shows that $Q \circ \iota_{\bx}$ is the identity, which in particular implies that $\iota_{\bx}$ is injective.
\end{proof}

\begin{lemma}
\label{LEMMA:field_vanish}
Let $\by \neq \bx$ be representatives of distinct orbits of the geometric group action $\alpha_{\vm / k}$.
Then for all $f_{\bx} \in \im(\iota_{\bx})$ and $f_{\by} \in \im(\iota_{\by})$, we have the identity $f_{\bx} \cdot f_{\by} = 0$.
\end{lemma}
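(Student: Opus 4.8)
The plan is to reduce the product $f_{\bx}\cdot f_{\by}$ to a substitution of the type handled by Lemma \ref{LEMMA:power_p_vanish}. Write $f_{\bx}=\iota_{\bx}(f)$ and $f_{\by}=\iota_{\by}(g)$; under the identification preceding this lemma, $f$ and $g$ are polynomials in $\mathcal{R}_{r_p(\vm)}$ such that $f$ vanishes at every representative in $S$ other than $\bx$, and $g$ vanishes at every representative in $S$ other than $\by$. Since the assignment $X_i\mapsto X_i^{p^t}$ defines a ring endomorphism of $A^n_k$, I would first record that $f(X_1^{p^t},\ldots,X_n^{p^t})\cdot g(X_1^{p^t},\ldots,X_n^{p^t})=(f\cdot g)(X_1^{p^t},\ldots,X_n^{p^t})$ in $A^n_k$, and hence also after reducing modulo $\ad_{\vm}$. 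Thus $f_{\bx}\cdot f_{\by}$ is the image in $\CR_{\vm/k}$ of the polynomial $(f\cdot g)(X_1^{p^t},\ldots,X_n^{p^t})$, and it suffices to prove that $f\cdot g\in\ad_{r_p(\vm)}$ and then invoke Lemma \ref{LEMMA:power_p_vanish}, which applies because $t\geq\sum_{i=1}^n v_p(m_i)$ by Lemma \ref{LEM:t_conditions}.

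The step $f\cdot g\in\ad_{r_p(\vm)}$ I would handle pointwise. For an arbitrary representative $\mathbf{z}\in S$, either $\mathbf{z}\neq\bx$, in which case $f(\mathbf{z})=0$, or $\mathbf{z}=\bx$, in which case $\mathbf{z}\neq\by$ since $\bx\neq\by$, so $g(\mathbf{z})=0$; in both cases $(f\cdot g)(\mathbf{z})=0$. By the ring isomorphism $\CR_{r_p(\vm)/k}\to\bigoplus_{\mathbf{z}\in S}k(\mathbf{z})$, $h\mapsto(h(\mathbf{z}))_{\mathbf{z}\in S}$ of Theorem \ref{THM:semisimple_ring_decomposition}, an element of $\CR_{r_p(\vm)/k}=A^n_k/\ad_{r_p(\vm)}$ is zero exactly when it vanishes at every point of $S$; hence the class of $f\cdot g$ in $A^n_k/\ad_{r_p(\vm)}$ is zero, i.e.\ $f\cdot g\in\ad_{r_p(\vm)}$. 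Applying Lemma \ref{LEMMA:power_p_vanish} to this polynomial yields $(f\cdot g)(X_1^{p^t},\ldots,X_n^{p^t})=0$ in $\CR_{\vm/k}$, and combining with the previous paragraph gives $f_{\bx}\cdot f_{\by}=0$.

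I do not anticipate a genuine obstacle here: the argument is essentially a combination of orthogonality of the idempotents in the semisimple quotient (Theorem \ref{THM:semisimple_ring_decomposition}) with the vanishing phenomenon of Lemma \ref{LEMMA:power_p_vanish}. The only point demanding care is the bookkeeping between the three rings $A^n_k$, $\CR_{r_p(\vm)/k}$ and $\CR_{\vm/k}$, together with the identifications of $k(\bx)$ and $k(\by)$ with subsets of $\mathcal{R}_{r_p(\vm)}$, so that Lemma \ref{LEMMA:power_p_vanish} is genuinely applied to a polynomial lying in the ideal $\ad_{r_p(\vm)}\subset A^n_k$ rather than merely to a residue class.
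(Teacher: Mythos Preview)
Your argument is correct and follows essentially the same route as the paper: lift $f_{\bx}$ and $f_{\by}$ through $\iota_{\bx}$ and $\iota_{\by}$, observe that the product of the preimages vanishes on all of $\V(\ad_{\vm})$ and hence lies in $\ad_{r_p(\vm)}$, then use that $\iota_{\bx}(f)\cdot\iota_{\by}(g)=(f\cdot g)(X_1^{p^t},\ldots,X_n^{p^t})$ and apply Lemma~\ref{LEMMA:power_p_vanish}. Your version is in fact slightly more explicit than the paper's in justifying the membership $f\cdot g\in\ad_{r_p(\vm)}$ via the isomorphism of Theorem~\ref{THM:semisimple_ring_decomposition}.
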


\begin{proof}
Let $f'_{\bx} \in k(\bx)$ and $f'_{\by} \in k(\by)$ such that $\iota_{\bx}(f'_{\bx}) = f_{\bx}$ and $\iota_{\by}(f'_{\by}) = f_{\by}$. 
Observe that the polynomial $f'_{\bx} \cdot f'_{\by}$ vanishes everywhere on $\V(\ad_{\vm})$, hence it must be contained in $\ad_{r_p(\vm)}$.

Note that we have the identities
\begin{align*}
f_{\bx} \cdot f_{\by} 
= \iota_{\bx}(f'_{\bx}) \cdot \iota_{\by}(f'_{\bx}) 
:= f'_{\bx}(X_1^{p^t}, \ldots, X_n^{p^t}) \cdot f'_{\by}(X_1^{p^t}, \ldots, X_n^{p^t}) 
= (f'_{\bx} \cdot f'_{\by})(X_1^{p^t}, \ldots, X_n^{p^t}),
\end{align*}
which is contained in $\ad_{\vm}$ by Lemma \ref{LEMMA:power_p_vanish}.
\end{proof}

\subsection{Indecomposable components}

In this subsection, we construct local rings which are the components of the local ring decomposition of $\CR_{\vm / k}$.

\begin{definition}
\label{DEF:importan_local_ring}
Let $S \subseteq \V(\ad_{\vm})$ be a set of representatives of the orbits of $\alpha_{\vm / k}$ as introduced in Definition \ref{DEF:geom_group_action}.
For $\bx \in S$, we define the corresponding local coordinate ring
\begin{align*}
\CR_{\vm / k} [\bx] := k(\bx)[Y_1, \ldots, Y_n] / \left(Y_1^{p^{v_p(m_1)}}, \ldots, Y_n^{p^{v_p(m_n)}} \right),
\end{align*}
where $p$ is the characteristic of $k$.
If $ p = 0$, then we define $\CR_{\vm / k} [\bx] := k(\bx)$.
\end{definition}

\begin{theorem}
\label{THEOREM:iso_construct}
The map
\begin{align*}
F_{\bx} \colon \CR_{\vm / k}[\bx] \to \CR_{\vm / k}, \ \sum_{j : j \ll p^{v_p(\vm)}}   f_j \cdot Y^j \mapsto \sum_{j : j \ll p^{v_p(\vm)}}  \iota_{\bx}(f_j) \cdot \Y_{r_p(\vm)}^j,
\end{align*}
is a well-defined injective $k$-linear map which preserves multiplication.
Here, $\Y_{r_p(\vm)}^j$ refers to the notation in Definition \ref{DEF:polyrep}.
\end{theorem}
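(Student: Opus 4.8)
The plan is to verify three things about $F_{\bx}$: that it is well-defined (the target lands in $\CR_{\vm/k}$ and respects the defining relations of the source), that it preserves multiplication, and that it is injective. The $k$-linearity is immediate since $F_{\bx}$ is defined by specifying images of the $k(\bx)$-spanning set $\{Y^j : j \ll p^{v_p(\vm)}\}$ and extending; I would dispose of it in one line. The real content is the ring-map property and injectivity, and the key external input is the polynomial structure theorem (Theorem \ref{THM:polyStructureTheorem}, especially Statement 4) together with the multiplicativity of $\iota_{\bx}$ (Proposition \ref{PROP:field_embed}) and the vanishing lemmas (\ref{LEMMA:power_p_vanish}, \ref{LEMMA:field_vanish}).

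\textbf{Well-definedness.} Fix $t$ as in Lemma \ref{LEM:t_conditions}. First I would observe that each $\iota_{\bx}(f_j)$ is an honest element of $\CR_{\vm/k}$ and $\Y_{r_p(\vm)}^j = \prod_i (X_i^{r_p(m_i)} - 1)^{j_i}$ is a genuine polynomial, so the right-hand side is a well-formed element of $\CR_{\vm/k}$; there is nothing to check here since the source is presented with a $k(\bx)$-basis $\{Y^j\}$, so a map is determined by arbitrary assignments on that basis. The substantive point comes in when we check multiplicativity: the product relation in the source forces $Y_i^{p^{v_p(m_i)}} = 0$, and we must confirm the corresponding image $\iota_{\bx}(\text{const})\cdot (X_i^{r_p(m_i)}-1)^{p^{v_p(m_i)}}$ vanishes in $\CR_{\vm/k}$ — but $(X_i^{r_p(m_i)}-1)^{p^{v_p(m_i)}} = X_i^{m_i}-1 \equiv 0$ in $\CR_{\vm/k}$ in characteristic $p$ (Freshman's dream), so this is fine. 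I would fold this observation into the multiplicativity argument rather than treating it separately.

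\textbf{Multiplicativity.} Take two elements $a = \sum_j f_j Y^j$ and $b = \sum_{j'} g_{j'} Y^{j'}$ of $\CR_{\vm/k}[\bx]$. Their product in the source is $\sum_{j,j'} f_j g_{j'} Y^{j+j'}$, where $Y^{j+j'}$ is interpreted modulo the relations $Y_i^{p^{v_p(m_i)}} = 0$. Applying $F_{\bx}$ and using that $\iota_{\bx}$ preserves products (so $\iota_{\bx}(f_j g_{j'}) = \iota_{\bx}(f_j)\iota_{\bx}(g_{j'})$), the image is $\sum_{j,j'}\iota_{\bx}(f_j)\iota_{\bx}(g_{j'})\,\Y_{r_p(\vm)}^{j+j'}$ where now I need $\Y_{r_p(\vm)}^{j+j'} = \Y_{r_p(\vm)}^j \cdot \Y_{r_p(\vm)}^{j'}$ as polynomials (which is clear from the product formula for $\Y^j_t$) and that whenever some component of $j+j'$ reaches $p^{v_p(m_i)}$ the corresponding term dies in $\CR_{\vm/k}$ — exactly the Freshman's-dream vanishing above, which matches the vanishing of $Y^{j+j'}$ in the source. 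Hence $F_{\bx}(a)F_{\bx}(b) = F_{\bx}(ab)$. I would also note $F_{\bx}(1) = \iota_{\bx}(1)\cdot \Y_{r_p(\vm)}^{0^n} = 1$.

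\textbf{Injectivity — the main obstacle.} This is where Theorem \ref{THM:polyStructureTheorem}(4) does the work, and it is the step I expect to require the most care. Suppose $F_{\bx}(\sum_j f_j Y^j) = 0$, i.e. $\sum_{j \ll p^{v_p(\vm)}} \iota_{\bx}(f_j)\,\Y_{r_p(\vm)}^j = 0$ in $\CR_{\vm/k}$. Lifting to the standard representatives $\mathcal{R}_{\vm}$ (Proposition \ref{PROP:representatives}), each $\iota_{\bx}(f_j)$ has partial degrees $< r_p(m_i)\cdot p^t$... — here I need to be slightly careful, since $\iota_{\bx}(f_j) = f_j(X^{p^t})$ has partial degrees below $r_p(m_i) p^t$ rather than below $r_p(m_i)$, so it is not literally in $\mathcal{R}_{r_p(\vm)}$. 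The clean route is instead: reduce the identity modulo $\ad_{\vm}$, and use that $\ad_{\vm} \subseteq \ad_{r_p(\vm)}$; apply the quotient $Q$ and Proposition \ref{PROP:field_embed} ($Q\circ\iota_{\bx} = \mathrm{id}$) to control the $j = 0^n$ term, then argue the higher $\Y^j$ components are nilpotent and peel them off inductively in $v_p$. Alternatively — and I think this is cleanest — observe that $\CR_{\vm/k} \cong \CR_{r_p(\vm)/k} \otimes_k \CR_{p^{v_p(\vm)}/k}$-style decomposition is morally what is happening: by Theorem \ref{THM:polyStructureTheorem}(4) applied with $t = r_p(\vm)$ and $j = p^{v_p(\vm)}$ (after noting $\mathcal{R}_{r_p(\vm)\cdot p^{v_p(\vm)}} = \mathcal{R}_{\vm}$), every element of $\mathcal{R}_{\vm}$ has a \emph{unique} decomposition $\sum_{0^n \le y \ll p^{v_p(\vm)}} h_y \Y_{r_p(\vm)}^y$ with $h_y \in \mathcal{R}_{r_p(\vm)}$. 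So it suffices to show each $\iota_{\bx}(f_j)$, reduced modulo $\ad_{\vm}$, lies in $\mathcal{R}_{r_p(\vm)}\Y_{r_p(\vm)}^{0}= \mathcal{R}_{r_p(\vm)}$ up to the relevant reindexing — more precisely, that the reduction of $\iota_{\bx}(f_j)$ has all partial degrees $< r_p(m_i)$, which follows because $X_i^{p^t} \equiv X_i^{p^t \bmod m_i}$ and $p^t \equiv 1 \bmod r_p(m_i)$ forces the reduced exponents into range — so the displayed sum \emph{is} the uniqueness decomposition, forcing each reduction of $\iota_{\bx}(f_j)$ to vanish, hence (by injectivity of $\iota_{\bx}$, Proposition \ref{PROP:field_embed}) each $f_j = 0$. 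The delicate bookkeeping is making the exponent reduction $p^t \bmod m_i$ land correctly so that $\iota_{\bx}(f_j)$ genuinely reduces into $\mathcal{R}_{r_p(\vm)}$; I would state that reduction step as a short sub-lemma or inline computation before invoking Theorem \ref{THM:polyStructureTheorem}(4).
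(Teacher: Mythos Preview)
Your well-definedness and multiplicativity arguments match the paper's: it too lifts to $\wt{F_{\bx}}$ on $k(\bx)[Y_1,\ldots,Y_n]$, checks that $Y_i^{p^{v_p(m_i)}}\mapsto (X_i^{r_p(m_i)}-1)^{p^{v_p(m_i)}} = X_i^{m_i}-1 = 0$, and descends. One minor slip: your claim $F_{\bx}(1)=1$ is false. The identity of $k(\bx)$, under its identification with $\CR_{r_p(\vm)/k}(\bx)$, is the primitive idempotent $e_{\bx}$ (value $1$ at $\bx$, value $0$ at the other orbit representatives), not the constant polynomial $1$; so $F_{\bx}(1)=\iota_{\bx}(e_{\bx})$ is a nontrivial idempotent of $\CR_{\vm/k}$, not its unit. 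The paper remarks on exactly this just after the theorem. It is harmless for the statement, which only claims that $F_{\bx}$ preserves multiplication.

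The injectivity argument, however, has a genuine gap. The step you flag as ``delicate bookkeeping'' --- that the reduction of $\iota_{\bx}(f_j)$ modulo $\ad_{\vm}$ has all partial degrees below $r_p(m_i)$ --- is false in general. The congruence $p^t \equiv 1 \pmod{r_p(m_i)}$ controls exponents modulo $r_p(m_i)$, not modulo $m_i$. Concretely, with $p=2$, $m_i=12$, $r_p(m_i)=3$, and $t=2$ (so $2^t=4\equiv 1\pmod 3$), the monomial $X_i$ is sent to $X_i^4$, which is already reduced modulo $X_i^{12}-1$ but has degree $4\geq r_p(m_i)=3$. Thus the reduction of $\iota_{\bx}(f_j)$ typically has nonzero $\Y_{r_p(\vm)}^y$-components for $y\neq 0$, and your direct appeal to the uniqueness in Theorem~\ref{THM:polyStructureTheorem}(4) collapses. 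The paper handles precisely this overflow: it writes $\iota_{\bx}(f_j)=\ovl{f_j}+O$ with $\ovl{f_j}\in\mathcal{R}_{r_p(\vm)}$ nonzero (this is where $Q\circ\iota_{\bx}=\mathrm{id}$ enters) and $O\in\ad_{r_p(\vm)}$, observes that $O\cdot\Y_{r_p(\vm)}^j$ contributes only to $\Y_{r_p(\vm)}^w$-components with $w>j$, and then picks $j$ of minimal total degree among the nonzero terms of $f$ to exhibit a surviving $\Y_{r_p(\vm)}^j$-component of $F_{\bx}(f)$. Your discarded first idea (apply $Q$, then peel off layers) was in fact closer to this than the route you committed to.
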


\begin{proof}
We divide the proof in two parts.

\paragraph{\textbf{Well-definedness}}
Consider the map
\begin{align*}
\wt{F_{\bx}} \colon k(\bx)[Y_1, \ldots, Y_n] \to \CR_{\vm / k}, \ \sum_{j \in \Z_{\geq 0}^n} f_j \cdot Y^j \mapsto \sum_{j  \in \Z_{\geq 0}^n}  \iota_{\bx}(f_j) \cdot \Y_{r_p(\vm)}^j.
\end{align*}
This map is a $k$-linear map, and it preserves multiplication since $\iota_{\bx}$ also satisfies these conditions (Proposition \ref{PROP:field_embed}).
The kernel of $\wt{F_{\bx}}$ contains the ideal $\left(  Y_1^{p^{v_p(m_1)}}, \ldots, Y_n^{p^{v_p(m_n)}} \right)$, since 
\begin{align*}
\wt{F_{\bx}} \left( Y_i^{p^{v_p(m_i)}} \right) 
= \left( X_i^{r_p(m_i)} - 1 \right)^{p^{v_p(m_i)}} 
= X_i^{r_p(m_i) \cdot p^{v_p (m_i)}} - 1 
= X_i^{m_i} - 1,
\end{align*}
are exactly the generators of $\ad_{\vm}$, which is the ideal qoutient of $\CR_{\vm / k}$.
Hence $\wt{F_{\bx}}$ induces the map $F_{\bx}$, which is thus also a $k$-linear map preserving multiplication.

\paragraph{\textbf{Injectivity}}
We show that $F_{\bx}$ is an injective map.

Assume to the contrary that this is not true, then there must exist $f := \sum_{j : j \ll p^{v_p(\vm)}}   f_j \cdot Y^j \in \mathcal{R}_{ p^{v_p(\vm)}}$ such that $F_{\bx}(f) \in \ad_{r_p(\vm)}$.
Take any term of $f$ which we denote as $f_j \cdot Y^j$ for some $n$-tuple $j$ corresponding to the term.
Note that there exists a non-zero $\ovl{f_j} \in \mathcal{R}_{r_p(\vm)}$ and $O \in \ad_{r_p(\vm)}$ such that $\iota_{\bx}(f_j)$ can be written as $\ovl{f_j} + O$.
This in particular means that 
$F_{\bx}(f_j \cdot Y^j) = (\ovl{f_j} + O) \cdot \Y_{r_p(\vm)}^j = \ovl{f_j} \cdot \Y_{r_p(\vm)}^j + O \cdot \Y_{r_p(\vm)}^j$ 
where $O \cdot \Y_{r_p(\vm)}^j \notin \mathcal{R}_{r_p(\vm) \cdot j}$.
By Theorem \ref{THM:polyStructureTheorem}, there exists a unique set of $n$-tuples $\mathcal{N}$ together with a unique family of corresponding non-zero polynomials $\{ g_w : w \in \mathcal{N} \} \subset \mathcal{R}_{r_p(\vm)}$ such that 
\begin{align*}
O \cdot \Y_{r_p(\vm)}^j = \sum_{w \in \mathcal{N}} g_w \cdot \Y_{r_p(\vm)}^w \in \bigoplus_{w \in \mathcal{N}} \mathcal{R}_{r_p(\vm)} \Y_{r_p(\vm)}^w.
\end{align*}
Observe that since $O \in \ad_{r_p(\vm)}$, we must have that $w > j$ for all $w \in \mathcal{N}$.
Thus if we choose the term $f_j \cdot Y^j$ as the term of the lowest total degree of $f$ (there can be multiple of such terms), then we are sure that $F_{\bx}(f_j Y^j)$ has a non-zero $\Y^j_{r_p(\vm)}$-component, and that no other term of $f$ mapped under $F_{\bx}$ has a non-zero $\Y^j_{r_p(\vm)}$-component.
This implies however that $F_{\bx}(f)$ is non-zero by Theorem \ref{THM:polyStructureTheorem}, which is a contradiction, hence $F_{\bx}$ must be injective.
\end{proof}

\begin{remark}
The map $\iota_{\bx}$ is not a ring homomorphism, as the unit element of $k(\bx)$ is not mapped to the unit element of $\CR_{\vm / k}$.
\end{remark}

\begin{definition}
\label{DEF:R_m(x)}
We denote the image of $F_{\bx}$ by $\CR_{\vm / k}(\bx)$, which is a $k$-vector space.
\end{definition}

\begin{lemma}
\label{LEMMA:ZeroIntersect}
Let $\bx, \bx' \in \V(\ad_{\vm})$ be two elements which represent different orbits under $\alpha_{\vm / k}$.
Then $\CR_{\vm / k}(\bx) \cap \CR_{\vm / k}(\bx') = \{ 0 \}$.
Moreover, if $f \in \CR_{\vm / k}(\bx)$ and $g \in \CR_{\vm / k}(\bx')$, then $f \cdot g = 0$.
\end{lemma}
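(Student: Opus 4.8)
The plan is to reduce everything to the structure theorem for polynomial rings (Theorem~\ref{THM:polyStructureTheorem}) together with the multiplicative vanishing already established in Lemma~\ref{LEMMA:field_vanish}. Recall that $\CR_{\vm/k}(\bx)$ is spanned over $k$ by elements of the form $\iota_{\bx}(f_j)\cdot \Y_{r_p(\vm)}^j$ with $j\ll p^{v_p(\vm)}$, and likewise $\CR_{\vm/k}(\bx')$ by elements $\iota_{\bx'}(g_{j'})\cdot \Y_{r_p(\vm)}^{j'}$. The second assertion is the more tractable one, so I would treat it first: given $f\in\CR_{\vm/k}(\bx)$ and $g\in\CR_{\vm/k}(\bx')$, write both as sums of such monomials in the $\Y_{r_p(\vm)}$'s and expand $f\cdot g$ by bilinearity. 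A single product term has the shape $\iota_{\bx}(f_j)\cdot\iota_{\bx'}(g_{j'})\cdot\Y_{r_p(\vm)}^{j+j'}$. By Lemma~\ref{LEMMA:field_vanish}, $\iota_{\bx}(f_j)\cdot\iota_{\bx'}(g_{j'})=0$ in $\CR_{\vm/k}$ already, since these lie in $\im(\iota_{\bx})$ and $\im(\iota_{\bx'})$ for representatives of distinct orbits. Hence every term vanishes and $f\cdot g=0$.

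For the intersection statement, suppose $h\in\CR_{\vm/k}(\bx)\cap\CR_{\vm/k}(\bx')$ is non-zero, so $h=F_{\bx}(f)=F_{\bx'}(g)$ for some non-zero $f\in\CR_{p^{v_p(\vm)}}$ (in the $Y$-variables over $k(\bx)$) and non-zero $g$ over $k(\bx')$. The idea is to run the same lowest-total-degree argument used in the injectivity part of Theorem~\ref{THEOREM:iso_construct}, but comparing the two representations. Pick a term $f_j\cdot Y^j$ of $f$ of minimal total degree in $j$. As in that proof, $\iota_{\bx}(f_j)=\ovl{f_j}+O$ with $\ovl{f_j}\in\mathcal{R}_{r_p(\vm)}$ non-zero and $O\in\ad_{r_p(\vm)}$, and the correction $O\cdot\Y_{r_p(\vm)}^j$ contributes only to $\Y_{r_p(\vm)}^w$-components with $w>j$; so by minimality, the $\Y_{r_p(\vm)}^j$-component of $h$ is exactly $\ovl{f_j}\cdot\Y_{r_p(\vm)}^j$, a non-zero element of $\mathcal{R}_{r_p(\vm)}\Y_{r_p(\vm)}^j$. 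Doing the same with $g$, there is likewise a minimal tuple $j'$ for which the $\Y_{r_p(\vm)}^{j'}$-component of $h$ is $\ovl{g_{j'}}\cdot\Y_{r_p(\vm)}^{j'}$ with $\ovl{g_{j'}}\in\mathcal{R}_{r_p(\vm)}$ non-zero. Comparing the two $\Y$-expansions of the single element $h\in\CR_{r_p(\vm)\cdot p^{v_p(\vm)}}$ via the uniqueness in Theorem~\ref{THM:polyStructureTheorem}(4), I would argue $j=j'$ and $\ovl{f_j}=\ovl{g_{j'}}$ in $\mathcal{R}_{r_p(\vm)}$. But $\ovl{f_j}$ reduces mod $\ad_{r_p(\vm)}$ to an element of $k(\bx)$ that vanishes on every $\by\in S\setminus\{\bx\}$, while $\ovl{g_{j'}}$ reduces to an element of $k(\bx')$ vanishing on every $\by\in S\setminus\{\bx'\}$; a non-zero polynomial in $\mathcal{R}_{r_p(\vm)}$ cannot simultaneously satisfy both, since $\CR_{r_p(\vm)/k}\cong\bigoplus_{\by\in S}k(\by)$ (Theorem~\ref{THM:semisimple_ring_decomposition}) forces it to vanish on all of $S$, hence to be $0$ — a contradiction.

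Alternatively, and perhaps more cleanly, I might avoid the degree bookkeeping entirely by using the second assertion: if $h\in\CR_{\vm/k}(\bx)\cap\CR_{\vm/k}(\bx')$, then taking $f=g=h$ in the just-proved multiplicativity gives $h^2=0$, so $h$ is nilpotent, hence $h\in\Jac(\CR_{\vm/k})=\ad_{r_p(\vm)}$; but $Q\circ F_{\bx}$ restricted to the degree-$0$ part is $Q\circ\iota_{\bx}=\mathrm{id}$ on $k(\bx)$, and more generally $Q(F_{\bx}(f))$ recovers the constant ($j=0^n$) coefficient $f_{0^n}\in k(\bx)$ since every $\Y_{r_p(\vm)}^j$ with $j\neq 0^n$ lies in $\ad_{r_p(\vm)}$; so $h\in\ad_{r_p(\vm)}$ forces $f_{0^n}=0$, and then $h=F_{\bx}(f-f_{0^n})$ with $f-f_{0^n}$ having strictly positive lowest degree — iterating (or directly invoking the graded/filtered injectivity of $F_{\bx}$) forces $f=0$, i.e. $h=0$. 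I expect the main obstacle to be the bookkeeping in matching up the two $\Y$-component expansions of $h$ in the first approach; the $h^2=0$ shortcut sidesteps it, so I would present that route, using Theorem~\ref{THM:polyStructureTheorem} only to justify that $F_{\bx}(f)\in\ad_{r_p(\vm)}$ implies the vanishing of the constant term and then the vanishing of $f$ by the same minimal-degree argument as in Theorem~\ref{THEOREM:iso_construct}.
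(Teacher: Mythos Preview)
Your treatment of the product statement is exactly the paper's: both reduce immediately to Lemma~\ref{LEMMA:field_vanish}. For the intersection statement, your first approach is essentially the paper's argument, only more carefully executed. The paper assumes $F_{\bx}(f)=F_{\bx'}(f')$, asserts $\iota_{\bx}(f_j)=\iota_{\bx'}(f'_j)$ for every $j$, picks a $j^*$ with $f_{j^*}\neq 0$, and then evaluates at $\bx$: $\iota_{\bx}(f_{j^*})(\bx)\neq 0$ while $\iota_{\bx'}(f'_{j^*})(\bx)=0$. Your minimal-degree analysis is precisely what justifies the paper's component-by-component comparison (since $\iota_{\bx}(f_j)$ need not lie in $\mathcal{R}_{r_p(\vm)}$, the equality of $\Y$-components is not automatic), and your final contradiction via Theorem~\ref{THM:semisimple_ring_decomposition} is the same evaluation-at-$\bx$ argument in different clothing.

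Your alternative $h^2=0$ route, however, has a real gap. From $h^2=0$ you correctly get $h\in\Jac(\CR_{\vm/k})=\ad_{r_p(\vm)}$ and hence $f_{0^n}=0$; but the ``iteration'' cannot continue. Knowing $h\in\ad_{r_p(\vm)}$ tells you nothing about the coefficients $f_j$ with $j\neq 0^n$, since $F_{\bx}(f)\in\ad_{r_p(\vm)}$ holds for \emph{every} $f$ with $f_{0^n}=0$ (e.g.\ $f=Y_1$). Nilpotency of $h$ is too coarse to peel off higher-degree terms. If you want to rescue this shortcut, use the idempotent instead: $e_{\bx}:=\iota_{\bx}(1_{k(\bx)})$ acts as the identity on $\CR_{\vm/k}(\bx)$ because $F_{\bx}$ preserves multiplication, and $e_{\bx}\cdot e_{\bx'}=0$ by Lemma~\ref{LEMMA:field_vanish}; then $h=e_{\bx}h=e_{\bx}(e_{\bx'}h)=(e_{\bx}e_{\bx'})h=0$ in one line. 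Either stick with your first approach or replace the nilpotency argument by this idempotent trick.
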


\begin{proof}
Assume to the contrary that there exist non-zero $f \in \CR_{\vm / k}[\bx]$ and $f' \in \CR_{\vm / k}[\bx']$ such that $F_{\bx}(f) = F_{\bx'}(f')$.
Then $\iota_{\bx}(f_j) = \iota_{\bx'}(f'_j)$ for all $n$-tuples $j \geq (0, \ldots, 0)$.
Since $f$ and $f'$ are non-zero, there exists an $n$-tuple $j^*$ such that $f_{j^*}$ is non-zero.
Observe that $\iota_{\bx}(f_{j^*})(\bx) \neq 0$ since $f_{j^*}$ is non-zero, but $\iota_{\bx'}(f'_{j^*})(\bx) = 0$ by assumption of $k(\bx')$.
Hence $\iota_{\bx}(f_j) \neq \iota_{\bx'}(f'_j)$, which is a contradiction.

The last statement is a direct consequence of Lemma \ref{LEMMA:field_vanish}.
\end{proof}

\begin{theorem}[\textbf{Local ring decomposition}]
\label{THM:direct_sum}
Let $k$ be a field of characteristic $p$, and consider the product ring $\bigoplus_{\bx \in S} \CR_{\vm / k}[\bx]$.
Then the map
\begin{align*}
F \colon \bigoplus_{\bx \in S}  \CR_{\vm / k}[\bx] \to \CR_{\vm / k}, \ (f_{\bx})_{\bx \in S} \mapsto \sum_{\bx \in S} F_{\bx}(f_{\bx}),
\end{align*}
is a ring isomorphism.
\end{theorem}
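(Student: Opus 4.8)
The plan is to assemble $F$ out of the building blocks constructed above and to verify the three things needed for a ring isomorphism: $F$ is a ring homomorphism, $F$ is injective, and $F$ is surjective. First I would check that $F$ is a ring homomorphism. Additivity and $k$-linearity are immediate from the additivity of each $F_{\bx}$ (Theorem \ref{THEOREM:iso_construct}), and $F$ sends the identity $(1,\ldots,1)$ to $\sum_{\bx \in S} F_{\bx}(1)$; by Proposition \ref{PROP:field_embed}, $Q \circ \iota_{\bx}$ is the identity, so $Q(\sum_{\bx} F_{\bx}(1)) = \sum_{\bx} \iota_{\bx}(1) = 1$ in $\CR_{r_p(\vm)/k} \cong \bigoplus_{\bx \in S} k(\bx)$, and since $\Jac(\CR_{\vm/k}) = \ad_{r_p(\vm)}$ is nilpotent, a preimage of a unit is a unit; a short argument (or a direct computation using that the $F_{\bx}(1)$ are orthogonal idempotents-up-to-nilpotents) shows $\sum_\bx F_\bx(1)$ is in fact the identity of $\CR_{\vm/k}$. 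For multiplicativity, write $(f_{\bx})_{\bx} \cdot (g_{\bx})_{\bx} = (f_{\bx} g_{\bx})_{\bx}$ and compute $F((f_{\bx}g_{\bx})_{\bx}) = \sum_{\bx} F_{\bx}(f_{\bx} g_{\bx}) = \sum_{\bx} F_{\bx}(f_{\bx}) F_{\bx}(g_{\bx})$ using that each $F_{\bx}$ preserves multiplication; on the other side, $F((f_{\bx})_{\bx}) \cdot F((g_{\bx})_{\bx}) = \sum_{\bx, \bx'} F_{\bx}(f_{\bx}) F_{\bx'}(g_{\bx'})$, and all cross terms with $\bx \neq \bx'$ vanish by Lemma \ref{LEMMA:ZeroIntersect}. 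So the two agree.

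Next I would establish injectivity. Suppose $F((f_{\bx})_{\bx}) = 0$, i.e. $\sum_{\bx \in S} F_{\bx}(f_{\bx}) = 0$ in $\CR_{\vm/k}$. For a fixed $\bx_0 \in S$, multiply this relation by $F_{\bx_0}(1)$: by Lemma \ref{LEMMA:ZeroIntersect} every term with $\bx \neq \bx_0$ dies, leaving $F_{\bx_0}(f_{\bx_0}) F_{\bx_0}(1) = F_{\bx_0}(f_{\bx_0}) = 0$, hence $f_{\bx_0} = 0$ by injectivity of $F_{\bx_0}$ (Theorem \ref{THEOREM:iso_construct}). As $\bx_0$ was arbitrary, $(f_{\bx})_{\bx} = 0$, so $F$ is injective.

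Finally, surjectivity. Here I would argue by a dimension count over $k$. On one hand, $\dim_k \CR_{\vm/k} = \prod_{i=1}^n m_i$, since $\mathcal{R}_{\vm}$ is a $k$-basis (Proposition \ref{PROP:representatives}). On the other hand, $\dim_k \CR_{\vm/k}[\bx] = [k(\bx):k] \cdot \prod_{i=1}^n p^{v_p(m_i)}$, and by the Wedderburn decomposition of the semisimple ring $\CR_{r_p(\vm)/k} \cong \bigoplus_{\bx \in S} k(\bx)$ we have $\sum_{\bx \in S} [k(\bx):k] = \dim_k \CR_{r_p(\vm)/k} = \prod_{i=1}^n r_p(m_i)$; multiplying, $\dim_k \bigoplus_{\bx \in S} \CR_{\vm/k}[\bx] = \big(\prod_i r_p(m_i)\big)\big(\prod_i p^{v_p(m_i)}\big) = \prod_i m_i$. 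Since $F$ is an injective $k$-linear map between finite-dimensional $k$-vector spaces of equal dimension, it is surjective, and therefore a ring isomorphism. (In characteristic $0$ the tuple $v_p(\vm)$ is zero, $\CR_{\vm/k}[\bx] = k(\bx)$, $S$ already indexes the Wedderburn components, and the statement reduces to Theorem \ref{THM:Wedderburn_Dec}.)

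I expect the main obstacle to be the bookkeeping around $F_{\bx}(1)$: one must be careful that $\iota_{\bx}$ is not a ring homomorphism (the preceding remark), so $F_{\bx}(1)$ is not a priori idempotent on the nose, only modulo $\ad_{r_p(\vm)}$, and one needs Lemma \ref{LEMMA:ZeroIntersect} (orthogonality of the images) together with nilpotence of the Jacobson radical to pin down that $\sum_{\bx} F_{\bx}(1)$ is genuinely the unit of $\CR_{\vm/k}$ and that the $F_{\bx}(1)$ are orthogonal idempotents. Everything else — multiplicativity, injectivity, the dimension count — is routine once that point is settled.
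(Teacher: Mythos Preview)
Your proposal is correct and follows essentially the same route as the paper: multiplicativity via the orthogonality in Lemma \ref{LEMMA:ZeroIntersect}, injectivity from the injectivity of the individual $F_{\bx}$, and surjectivity by the $k$-dimension count $\sum_{\bx}[k(\bx):k]\cdot\prod_i p^{v_p(m_i)} = \prod_i m_i$.

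The one point worth noting is exactly the ``main obstacle'' you flag: the paper sidesteps it entirely by reversing the order. It first establishes bijectivity (injectivity from Lemma \ref{LEMMA:ZeroIntersect} plus the dimension count), and only \emph{then} argues about the unit: given that $F$ is already a multiplicative bijection, for any $f\in \CR_{\vm/k}$ one writes $f=F(f^*)$ and computes $F(1_S)\cdot f = F(1_S)F(f^*) = F(1_S\cdot f^*) = F(f^*) = f$, so $F(1_S)$ is the identity. No nilpotence, no idempotent lifting. Your direct argument is fine too (each $F_{\bx}(1)$ is genuinely idempotent since $F_{\bx}$ preserves multiplication, they are orthogonal, so $e=\sum_{\bx}F_{\bx}(1)$ is idempotent; $1-e$ lies in $\Jac(\CR_{\vm/k})$ and is idempotent, hence zero), but the paper's ordering makes the bookkeeping disappear.
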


\begin{proof}
We split the proof in two parts.

\paragraph{\textbf{Bijectivity}}
As a result of Lemma \ref{LEMMA:ZeroIntersect}, the image of $F$ is the direct sum $\bigoplus_{\bx \in S} \CR_{\vm / k}(\bx)$ within $\CR_{\vm / k}$.
This implies that $F$ is injective, as all the maps $F_{\bx}$ are injective.

Let us show surjectivity.
Since $F$ is a $k$-linear map, we conclude that $\bigoplus_{\bx \in S} \CR_{\vm / k}(\bx)$ is a $k$-subspace of $\CR_{\vm / k}$ viewed as a $k$-vector space.
Observe that 
\begin{align*}
\dim_k \left( \bigoplus_{\bx \in S}  \CR_{\vm / k}(\bx) \right)
&= \sum_{\bx \in S} \dim_k(\CR_{\vm / k}[\bx]) 
= \sum_{\bx \in S} \left( [k(\bx) : k] \cdot  \prod_{i = 1}^n p^{v_p(m_i)} \right)\\
&= \prod_{i = 1}^n p^{v_p(m_i)} \cdot \left( \sum_{\bx \in S} [k(\bx) : k] \right).
\end{align*}
Note that $\sum_{\bx \in S} [k(\bx) : k] = \sum_{\bx \in S} \# \Orb(\bx) = \# \V(\ad_{r_p(\vm)}) = \prod_{i=1}^n r_p(m_i)$, hence
\begin{align*}
\dim_k \left( \bigoplus_{\bx \in S}  \CR_{\vm / k}[\bx] \right)
= \left( \prod_{i = 1}^n p^{v_p(m_i)} \right) \cdot \left( \prod_{j=1}^n r_p(m_j) \right)
= \prod_{i = 1}^n r_p(m_i) \cdot p^{v_p(m_i)} = \prod_{i=1}^n m_i.
\end{align*}
On the other hand, $\dim_k(\CR_{\vm / k}) = \prod_{i = 1}^n m_i$, hence $\dim_k \left( \bigoplus_{\bx \in S}  \CR_{\vm / k}[\bx] \right) = \dim_k(\CR_{\vm / k})$.
This implies that $F$ is also surjective, hence a bijection.

\paragraph{\textbf{Ring homomorphism}}
Here we show that $F$ is a ring homomorphism.

Observe that $F$ preserves addition, as this is the sum of $F_{\bx}$ which are all (additive) group homomorphisms.

Let $f = (f_{\bx})_{\bx \in S}, g = (g_{\bx})_{\bx \in S} \in \bigoplus_{\bx \in S} \CR_{\vm / k}(\bx)$, then
\begin{align*}
F(f) \cdot F(g) 
&= \left( \sum_{\bx \in S} F_{\bx}(f_{\bx}) \right)  \left( \sum_{\bx \in S} F_{\bx}(g_{\bx}) \right)
= \sum_{\bx \in S} F_{\bx}(f_{\bx}) \cdot F_{\bx}(g_{\bx}) 
= \sum_{\bx \in S} F_{\bx}(f_{\bx} \cdot g_{\bx})    \\
&= F(f \cdot g),
\end{align*}
where the second equation is due to Lemma \ref{LEMMA:ZeroIntersect}, and the third equation due to the fact that the $F_{\bx}$ preserve multiplication.
Hence $F$ preserves multiplication.

Let $1_S$ be the identity in $\bigoplus_{\bx \in S}  \CR_{\vm / k}[\bx]$.
We want to show that $F(1_S)$ is the identity in $\CR_{\vm / k}$.
Let $f$ be any element in $\CR_{\vm / k}$.
Since $F$ is bijective, there exists a unique element $f^* \in \bigoplus_{\bx \in S}  \CR_{\vm / k}[\bx]$ such that $F(f^*) = f$.
Note that $F(1_S) \cdot f = F(1_S) F(f^*) = F(1_S f^*) = F(f^*) = f$.
Since this is true for all $f \in \CR_{\vm / k}$, $F(1_S)$ must be the identity in $\CR_{\vm / k}$.

In conclusion, $F$ is a well-defined bijective ring homomorphism, which makes it a ring isomorphism.
\end{proof}

\begin{corollary}
\label{COR:non-locality}
Let $k$ be a field of characteristic $p$, where $p$ is either $0$ or a prime number. 
If there exist entries of the $n$-tuple $\vm$ which are not powers of $p$, then $\CR_{\vm / k}$ is not local.
\end{corollary}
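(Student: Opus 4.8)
The plan is to contrapose the statement of Lemma~\ref{LEMMA:local_condition}: there we learned that when all entries of $\vm$ are powers of $p$, the ring $\CR_{\vm / k}$ is local. Here we want to show the converse, so assume some entry $m_i$ is not a power of $p$, equivalently $r_p(m_i) > 1$ for some $i$. The natural strategy is to exhibit a nontrivial decomposition of $\CR_{\vm / k}$ as a ring (or to count idempotents), and the cleanest way to do that is via the local ring decomposition $\CR_{\vm / k} \cong \bigoplus_{\bx \in S} \CR_{\vm / k}[\bx]$ from Theorem~\ref{THM:direct_sum}. A local ring has no nontrivial idempotents, hence cannot be a direct product of two nonzero rings; so it suffices to show that the index set $S$ of orbit representatives of $\alpha_{\vm / k}$ has size at least $2$ under our hypothesis.

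First I would handle the characteristic zero case separately and quickly: when $p = 0$ we have $r_p(m_i) = m_i$, and ``not a power of $p$'' simply means $m_i > 1$; then $\CR_{\vm / k}$ is semisimple by Theorem~\ref{THM:Maschke}, and a semisimple ring with more than one orbit (see below) is a product of at least two fields, hence not local. For the prime case, the key computation is $\#\V(\ad_{r_p(\vm)}) = \prod_{i=1}^n r_p(m_i)$, which appears already in the proof of Theorem~\ref{THM:direct_sum}. Since some $r_p(m_i) > 1$, we get $\#\V(\ad_{r_p(\vm)}) \geq 2$. Now $S$ is a set of orbit representatives of the finite group $\Gal(k_{\vm}/k)$ acting on $\V(\ad_{r_p(\vm)}) = \V(\ad_{\vm})$; if $\# S = 1$ then the whole vanishing set is a single orbit, so $\CR_{r_p(\vm)/k} \cong k(\bx)$ is a field and in particular $\CR_{\vm / k}[\bx]$ is its unique local summand — but then I need to rule this degenerate possibility out, which is where a little care is required.

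To force $\# S \geq 2$, I would pick a concrete pair of points in $\V(\ad_{\vm})$ lying in different Galois orbits. The point $(1, 1, \ldots, 1)$ always lies in $\V(\ad_{\vm})$ and is Galois-fixed, so $\Orb((1,\ldots,1)) = \{(1,\ldots,1)\}$. On the other hand, choosing $i$ with $r_p(m_i) > 1$ and letting $\zeta$ be a primitive $r_p(m_i)$-th root of unity in $\ovl{k}$, the point $\bx$ with $i$-th coordinate $\zeta$ and all other coordinates $1$ lies in $\V(\ad_{\vm})$ and is distinct from $(1,\ldots,1)$; since $\zeta \neq 1$ while the only way a Galois element could send this $\bx$ back to $(1,\ldots,1)$ is by fixing all coordinates, these two points lie in distinct orbits. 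Hence $\# S \geq 2$, and $\CR_{\vm / k} \cong \bigoplus_{\bx \in S} \CR_{\vm / k}[\bx]$ is a direct sum of at least two nonzero rings, which therefore contains a nontrivial idempotent and cannot be local (a local ring, being indecomposable as a module over itself by Lemma~\ref{LEM:local_indecomposable}, admits no such decomposition).

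The main obstacle I anticipate is purely bookkeeping: making sure the two chosen points genuinely live in $\V(\ad_{\vm})$ — which requires $\mu_{m_i} = \mu_{r_p(m_i)}$, already recorded in the excerpt — and that ``distinct orbits'' is argued cleanly rather than hand-waved, since a Galois automorphism permutes roots of unity and one must note it cannot collapse $\zeta$ to $1$. Everything else — the reduction from ``not local'' to ``$\#S \geq 2$,'' and the appeal to Theorem~\ref{THM:direct_sum} and Lemma~\ref{LEM:local_indecomposable} — is immediate from the machinery already built. An alternative, slightly slicker route avoids choosing explicit points: simply observe that $\CR_{\vm/k}/\Jac(\CR_{\vm/k}) \cong \CR_{r_p(\vm)/k}$ has $k$-dimension $\prod r_p(m_i) \geq 2$ and is semisimple, so it is either a nontrivial product of fields (done) or a single field extension of degree $\geq 2$; in the latter case $\CR_{\vm/k}[\bx]$ is still a genuine product decomposition only if $\#S\ge 2$, so one still needs the orbit-counting input, and I would present the explicit-points argument as the honest one.
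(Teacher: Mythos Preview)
Your proposal is correct and follows essentially the same route as the paper: both invoke Theorem~\ref{THM:direct_sum} and argue that the orbit set $S$ has size at least two, so $\CR_{\vm/k}$ is a nontrivial direct product of local rings and hence not local. The paper's proof simply asserts $\#S>1$ without further justification, whereas you supply the missing detail by exhibiting the fixed point $(1,\ldots,1)$ and a second point with a nontrivial root of unity in one coordinate; your version is the more careful of the two.
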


\begin{proof}
Since not all entries of $\vm$ are coprime, the set of orbit representatives $S$ of $\alpha_{\vm / k}$ consists of more than one element.
Hence by Theorem \ref{THM:direct_sum}, $\CR_{\vm / k}$ is a direct sum of multiple local rings, which cannot be local.
\end{proof}

\begin{theorem}[\textbf{Krull-Remak-Schmidt for circulant rings}]
\label{THM:KRM_circulant_rings}
Let $\CR_{\vm / k}$ be a circulant ring over a field $k$ of characterstic $p$.
Let $S$ be a set of representatives of the orbits of $\alpha_{\vm / k}$ introduced in Definition \ref{DEF:geom_group_action}.
Then $\CR_{\vm / k}$ admits the module decomposition $\bigoplus_{\bx \in S}  \CR_{\vm / k}(\bx)$, where $\CR_{\vm / k}(\bx)$ is introduced in Definition \ref{DEF:R_m(x)}.
Moreover, for every $\bx \in S$, $\CR_{\vm / k}(\bx)$ is an indecomposable $\CR_{\vm / k}$-module, which for $p$ prime is equivalent to the local circulant ring
\begin{align}
\label{EQ:KRS_circ}
k(\bx)[Y_1,...,Y_n] / \left(  Y_1^{p^{v_p(m_1)}} - 1, \ldots, Y_n^{p^{v_p(m_n)}} - 1 \right).
\end{align}
For $p = 0$, $\CR_{\vm / k}(\bx)$ is isomorphic to $k(\bx)$.
\end{theorem}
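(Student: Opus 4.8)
The plan is to assemble the statement from results already proved, treating the decomposition, the indecomposability, and the identification of each summand as three separate tasks.

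First, for the \textbf{module decomposition}. Theorem \ref{THM:direct_sum} gives a ring isomorphism $F \colon \bigoplus_{\bx \in S} \CR_{\vm / k}[\bx] \to \CR_{\vm / k}$, and by Definition \ref{DEF:R_m(x)} the image $F_{\bx}(\CR_{\vm / k}[\bx]) = \CR_{\vm / k}(\bx)$. So the first step is to observe that $F$ carries the product-ring decomposition $\bigoplus_{\bx \in S} \CR_{\vm / k}[\bx]$ onto $\bigoplus_{\bx \in S} \CR_{\vm / k}(\bx)$, giving the claimed internal direct sum as $\CR_{\vm / k}$-modules; here Lemma \ref{LEMMA:ZeroIntersect} already guarantees the sum is direct.

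Second, for \textbf{indecomposability} and the \textbf{identification of the summand}. When $p$ is prime, I would note that $\CR_{\vm / k}[\bx] = k(\bx)[Y_1,\ldots,Y_n]/(Y_1^{p^{v_p(m_1)}},\ldots,Y_n^{p^{v_p(m_n)}})$ is a finite-dimensional local $k(\bx)$-algebra — its maximal ideal is $(Y_1,\ldots,Y_n)$ with residue field $k(\bx)$, and every element outside that ideal is a unit because the $Y_i$ are nilpotent — so by Lemma \ref{LEM:local_indecomposable} it is indecomposable as a module over itself. The identification with \eqref{EQ:KRS_circ} comes from the substitution $Y_i \mapsto Y_i - 1$, which is a $k(\bx)$-algebra automorphism of $k(\bx)[Y_1,\ldots,Y_n]$ sending the ideal $(Y_i^{p^{v_p(m_i)}})$ to $((Y_i-1)^{p^{v_p(m_i)}}) = (Y_i^{p^{v_p(m_i)}} - 1)$, the last equality being the Frobenius identity in characteristic $p$. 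For $p = 0$, Definition \ref{DEF:importan_local_ring} sets $\CR_{\vm / k}[\bx] = k(\bx)$, which is a field, hence local and indecomposable. The one point needing care is that indecomposability as a module over itself transfers along the ring isomorphism $F_{\bx} \colon \CR_{\vm / k}[\bx] \to \CR_{\vm / k}(\bx)$ — but here $\CR_{\vm / k}(\bx)$ is only an ideal of $\CR_{\vm / k}$, not a ring with the same unit, so I would invoke the argument already used in Theorem \ref{THM:important_result} and Corollary \ref{COR:important_result}: compose with the projection of the product decomposition to reduce indecomposability of the summand (as a $\CR_{\vm / k}$-module) to indecomposability of $\CR_{\vm / k}[\bx]$ over itself.

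The \textbf{main obstacle} is the bookkeeping in that last reduction: the map $F_{\bx}$ is a ring isomorphism onto $\CR_{\vm / k}(\bx)$, but $\CR_{\vm / k}(\bx)$ sits inside $\CR_{\vm / k}$ as a non-unital ideal, so one must be precise about what "indecomposable $\CR_{\vm / k}$-module" means and why it follows from indecomposability of the abstract local ring $\CR_{\vm / k}[\bx]$. Everything else — the decomposition, the Frobenius computation $(Y_i-1)^{p^{v_p(m_i)}} = Y_i^{p^{v_p(m_i)}} - 1$, and the $p=0$ case — is immediate from the cited results.
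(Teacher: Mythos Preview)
Your proposal is correct and follows essentially the same route as the paper: invoke Theorem~\ref{THM:direct_sum} together with Corollary~\ref{COR:important_result} for the decomposition and indecomposability, then apply the substitution $Y_i \mapsto Y_i - 1$ (using the Frobenius identity) to identify $\CR_{\vm/k}[\bx]$ with the form in~\eqref{EQ:KRS_circ}. The ``bookkeeping obstacle'' you flag is exactly what Corollary~\ref{COR:important_result} packages, so you can cite it directly rather than re-running the argument of Theorem~\ref{THM:important_result}.
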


\begin{proof}
All the claims except for Eq. (\ref{EQ:KRS_circ}) are direct consequences of Theorem \ref{THM:direct_sum} and Corollary \ref{COR:important_result}.
We only need to show that $\CR_{\vm / k}[\bx]$ as defined in \ref{DEF:importan_local_ring} is isomorphic to Eq. (\ref{EQ:KRS_circ}).
When $p$ is prime, this is simply a matter of applying the variable transformation $Y_i \mapsto Y_i - 1$.
The case for $p = 0$ is immediate.
\end{proof}

\section{Group algebras over finite fields}
\label{SECTION:Application}

In this section, we combine the above results together with the results in \cite{subroto2024wedderburndecompositioncommutativesemisimple} to provide a full overview of the Krull-Remak-Schmidt decomposition of circulant rings over finite fields.
We denote a finite field of order $q$ by $\F_q$.

We also give an explicit example using $\CR_{(5,5,2^l) / \F_2}$, where $l \in \Z_{>0}$.
This particular circulant ring is interesting, as it is used in the construction of the $\theta$-function in the cryptographic primitive {\sc Keccak}-$f$ \cite{DBLP:journals/iacr/BertoniDPA15}.

\subsection{Krull-Remak-Schmidt over finite fields}

We introduce some additional notation related to the ring of integers modulo $m$.
The multiplicative group of $\Z_m$ is denoted as $\Z_m^*$, 
We denote the order of $\Z_m^*$ by $\varphi(m)$, which is known as the Euler's totient function.
Observe that $a \in \Z_m$ is contained in $\Z_m^*$ if and only if $a$ is coprime to $m$.
For $a \in \Z_m^*$, we denote the algebraic order of $a$ as $\ord_m(a)$.

We define $\Div_m$ as the set of all divisors of $m$ including $1$ and $m$ itself.
For an $n$-tuple $\vm \in \Z_{>0}^n$, we define $\Div_{\vm} := \Div_{m_1} \times \ldots \times \Div_{m_n} \subset \Z_{>0}^n$.

\begin{theorem}[\textbf{Circulant Krull-Remak-Schmidt decomposition over finite fields}]
\label{THM:KRM_finite_fields}
Let $\F_q$ be a finite field of characteristic $p$, and consider the circulant ring $\CR_{\vm / \F_q}$.
For any $\vd := (d_1, \ldots, d_n) \in \Div_{r_p(\vm)}$, define the expressions $\nu_{\vm / \F_q}(\vd) := \lcm_{1 \leq i \leq n} (\ord_{r_p(m_i) / d_i}(q))$ and $\eta_{\vm / \F_q}(\vd) := \frac{\prod_{i=1}^n \varphi(r_p(m_i))}{\nu_{\vm,q}(\vd)}$.
Then we have the decomposition:
\begin{align*}
\CR_{\vm / \F_q} 
\cong
\bigoplus_{\vd \in \Div_{r_p(\vm)}} \left( \F_{q^{\nu_{\vm / \F_q}(\vd)}} [Y_1, \ldots, Y_n] / \left( Y_1^{p^{v_p(m_1)}} - 1, \ldots,  Y_n^{p^{v_p(m_n)}} - 1  \right) \right)^{\eta_{\vm / \F_q}(\vd)}.
\end{align*}
\end{theorem}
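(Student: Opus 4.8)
The plan is to bootstrap from the general circulant Krull--Remak--Schmidt decomposition and reduce the statement to an orbit count over $\F_q$. By Theorems~\ref{THM:direct_sum} and~\ref{THM:KRM_circulant_rings} there is a ring isomorphism $\CR_{\vm / \F_q} \cong \bigoplus_{\bx \in S} R(\bx)$, where $S$ is a set of representatives of the orbits of $\alpha_{\vm / \F_q}$ and $R(\bx) := \F_q(\bx)[Y_1, \ldots, Y_n] / (Y_1^{p^{v_p(m_1)}} - 1, \ldots, Y_n^{p^{v_p(m_n)}} - 1)$. The exponents $p^{v_p(m_i)}$ are determined by $\vm$, and a finite extension of $\F_q$ is determined by its degree, so $R(\bx)$ depends on $\bx$ only through $e(\bx) := [\F_q(\bx):\F_q]$; in fact $\F_q(\bx) \cong \F_{q^{e(\bx)}}$. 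It therefore suffices to compute, for each admissible degree, how many orbit representatives $\bx \in S$ realize it, and then to gather the isomorphic summands.

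First I would make the action explicit. Recall $\V(\ad_{\vm}) = \V(\ad_{r_p(\vm)}) = \mu_{r_p(m_1)} \times \cdots \times \mu_{r_p(m_n)}$, and put $L := \F_q(\mu_{r_p(m_1)}, \ldots, \mu_{r_p(m_n)})$; then $L / \F_q$ is a finite, hence cyclic, Galois extension whose group is generated by the $q$-power Frobenius $x \mapsto x^q$. For $\bx = (x_1, \ldots, x_n) \in \V(\ad_{\vm})$ we have $\F_q(\bx) \subseteq L$, and $\sigma \in \Gal(L / \F_q)$ fixes $\bx$ if and only if it fixes $\F_q(\bx)$ pointwise, so the stabilizer of $\bx$ under $\alpha_{\vm / \F_q}$ is $\Gal(L / \F_q(\bx))$; orbit--stabilizer then yields $\# \Orb(\bx) = [\F_q(\bx):\F_q]$. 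Moreover $\F_q(\bx)$ is the compositum of the $\F_q(x_i)$, and for a root of unity $x_i$ of multiplicative order $d$ one has $\F_q(x_i) = \F_q(\mu_d)$ with $[\F_q(\mu_d):\F_q] = \ord_d(q)$; since the compositum of $\F_{q^{a_1}}, \ldots, \F_{q^{a_n}}$ is $\F_{q^{\lcm_i a_i}}$, we obtain $[\F_q(\bx):\F_q] = \lcm_i \ord_{\ord(x_i)}(q)$.

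Next I would organize $\V(\ad_{\vm})$ by \emph{order profile}: to $\vd = (d_1, \ldots, d_n) \in \Div_{r_p(\vm)}$ attach $T_{\vd} := \{ \bx \in \V(\ad_{\vm}) \mid \ord(x_i) = r_p(m_i)/d_i \text{ for all } i \}$. As $\vd$ runs over $\Div_{r_p(\vm)}$ the tuples $(r_p(m_i)/d_i)_i$ run over all tuples of divisors, so the $T_{\vd}$ partition $\V(\ad_{\vm})$. By the previous paragraph every $\bx \in T_{\vd}$ satisfies $\# \Orb(\bx) = \lcm_i \ord_{r_p(m_i)/d_i}(q) = \nu_{\vm / \F_q}(\vd)$, so $T_{\vd}$ is a disjoint union of orbits of that common size, and $\# T_{\vd} = \prod_{i=1}^n \varphi(r_p(m_i)/d_i)$ because $\mu_{r_p(m_i)}$ is cyclic of order $r_p(m_i)$. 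Hence $T_{\vd}$ consists of exactly $\eta_{\vm / \F_q}(\vd) = \# T_{\vd} / \nu_{\vm / \F_q}(\vd)$ orbits, and for every representative $\bx$ of such an orbit $\F_q(\bx) \cong \F_{q^{\nu_{\vm / \F_q}(\vd)}}$, so $R(\bx) \cong \F_{q^{\nu_{\vm / \F_q}(\vd)}}[Y_1, \ldots, Y_n] / (Y_1^{p^{v_p(m_1)}} - 1, \ldots, Y_n^{p^{v_p(m_n)}} - 1)$.

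Finally I would assemble: writing $S = \bigsqcup_{\vd \in \Div_{r_p(\vm)}} (S \cap T_{\vd})$ with $\#(S \cap T_{\vd}) = \eta_{\vm / \F_q}(\vd)$, I substitute the above into $\CR_{\vm / \F_q} \cong \bigoplus_{\bx \in S} R(\bx)$ and, for each $\vd$, collect the $\eta_{\vm / \F_q}(\vd)$ mutually isomorphic summands attached to $S \cap T_{\vd}$; this is exactly the asserted direct sum. A consistency check is the dimension count $\sum_{\vd} \eta_{\vm / \F_q}(\vd)\, \nu_{\vm / \F_q}(\vd) \prod_i p^{v_p(m_i)} = \prod_i r_p(m_i) p^{v_p(m_i)} = \prod_i m_i = \dim_{\F_q} \CR_{\vm / \F_q}$. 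The one step that needs genuine care is the middle one --- establishing $\# \Orb(\bx) = [\F_q(\bx):\F_q] = \nu_{\vm / \F_q}(\vd)$ for $\bx \in T_{\vd}$ together with the totient count of $\# T_{\vd}$ --- since it combines orbit--stabilizer, the degree formula for cyclotomic extensions of finite fields, and the law of composita of finite fields; the rest is the same cyclotomic-coset bookkeeping already carried out for the semisimple quotient in \cite{subroto2024wedderburndecompositioncommutativesemisimple}.
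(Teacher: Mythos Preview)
Your proposal is correct and takes essentially the same route as the paper: you invoke Theorem~\ref{THM:KRM_circulant_rings} to obtain $\CR_{\vm/\F_q}\cong\bigoplus_{\bx\in S}R(\bx)$ and then perform the finite-field orbit analysis that the paper's one-line proof delegates to \cite{subroto2024wedderburndecompositioncommutativesemisimple}. Your explicit stratification by order profile, together with the identities $\#\Orb(\bx)=[\F_q(\bx):\F_q]=\lcm_i\ord_{\ord(x_i)}(q)$ and $\#T_{\vd}=\prod_i\varphi(r_p(m_i)/d_i)$, is precisely the content of that reference, so nothing further is needed; note in passing that your count $\eta_{\vm/\F_q}(\vd)=\prod_i\varphi(r_p(m_i)/d_i)/\nu_{\vm/\F_q}(\vd)$ is the one that actually agrees with the worked example (up to the reindexing $\vd\leftrightarrow r_p(\vm)/\vd$), whereas the displayed formula for $\eta$ in the statement appears to carry a typo.
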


\begin{proof}
This is immediate from Theorem \ref{THM:KRM_circulant_rings}, and the Wedderburn decomposition of circulant rings over finite fields presented in \cite{subroto2024wedderburndecompositioncommutativesemisimple}.
\end{proof}

\subsection{An example}

Consider the case 
\begin{align*}
\CR_{(5,5,2^l) / \F_2} := \F_2[X_1, X_2, X_3] / (X_1^5 - 1, X_2^5 - 1, X_3^{2^l} - 1),
\end{align*}
for some positive integer $l$.
Observe that $r_2((5,5,2^l)) = (5,5,1)$, hence 
\begin{align*}
\Div_{r_2((5,5,2^l))} := \{ (1,1,1), (1,5,1), (5,1,1), (5,5,1) \}.
\end{align*}
In view of Theorem \ref{THM:KRM_finite_fields}, we compute $\prod_{i=1}^3 \varphi(d_i)$, $\nu_{(5,5,2^l) / \F_2}(\vd)$ and $\eta_{(5,5,2^l) / \F_2}(\vd)$ for each $\vd \in \Div_{r_2((5,5,2^l))}$:

\begin{table}[!h]
\begin{center}
	\begin{tabular}{| l | l | l | l |}
	\hline
	$\vd = (d_1,d_2,d_3)$	& $\prod_{i=1}^3 \varphi(d_i)$	& $\nu_{(5,5,2^l) / \F_2}(\vd)$	& $\eta_{(5,5,2^l) / \F_2}(\vd)$ 	\\ \hline
	$(1,1,1)$				& $1$							& $1$ 							& $1$						\\
	$(1,5,1)$				& $4$							& $4$							& $1$						\\
	$(5,1,1)$				& $4$							& $4$							& $1$						\\
	$(5,5,1)$				& $16$							& $4$							& $4$						\\
	\hline
	\end{tabular}
\caption{Values of $\nu_{(5,5,1) / \F_2}(\vd)$ and $\eta_{(5,5,1) / \F_2}(\vd)$}
\end{center}
\end{table}

By Theorem \ref{THM:KRM_finite_fields}, the number of indecomposable components of $\CR_{(5,5,2^l) / \F_2}$ equals:
\begin{align*}
\sum_{d \in \Div_{r_p(\vm)}} \eta_{(5,5,1) / \F_2}(\vd) = 1 + 1 + 1 + 4  = 7.
\end{align*}
Again by Theorem \ref{THM:KRM_finite_fields}, the indecomposable components of $\CR_{(5,5,2^l) / \F_2}$ consist up to isomorphism of the following rings:
\begin{align*}
\F_2[Y_1, Y_2, Y_3] / (Y_1 - 1, Y_2 - 1, Y_3^{2^l} - 1) &\cong \F_2[Y] / (Y^{2^l} - 1), \\
\F_{2^4}[Y_1, Y_2, Y_3] / (Y_1 - 1, Y_2 - 1, Y_3^{2^l} - 1) &\cong \F_{2^4}[Y] / (Y^{2^l} - 1),
\end{align*}
hence we have the decomposition:
\begin{align*}
\CR_{(5,5,2^l) / \F_2} \cong \F_2[Y] / (Y^{2^l} - 1) \oplus \left(  \F_{2^4}[Y] / (Y^{2^l} - 1) \right)^6.
\end{align*}

\begin{acknowledgements}
I would like to thank Jan Schoone for providing this manuscript with useful feedback.

I would like to thank my PhD-supervisor prof. dr. Joan Daemen for providing me with research topics leading to this paper.

This work was supported by the European Research Council under the ERC advanced grant agreement under grant ERC-2017-ADG Nr.\ 788980 ESCADA. 
\end{acknowledgements}

\bibliographystyle{amsalpha} 
\bibliography{reference_paper5}

\providecommand{\bysame}{\leavevmode\hbox to3em{\hrulefill}\thinspace}
\providecommand{\MR}{\relax\ifhmode\unskip\space\fi MR }
\providecommand{\MRhref}[2]{%
  \href{http://www.ams.org/mathscinet-getitem?mr=#1}{#2}
}
\providecommand{\href}[2]{#2}
\begin{thebibliography}{BDPA15}

\bibitem[BDPA15]{DBLP:journals/iacr/BertoniDPA15}
Guido Bertoni, Joan Daemen, Micha{\"{e}}l Peeters, and Gilles~Van Assche,
  \emph{Keccak}, {IACR} Cryptol. ePrint Arch. (2015), 389.

\bibitem[Con63]{connell1963group}
Ian~G Connell, \emph{On the group ring}, Canadian Journal of Mathematics
  \textbf{15} (1963), 650--685.

\bibitem[Jac09]{jacobson2009basic}
Nathan Jacobson, \emph{Basic algebra i}, vol.~1, Courier Corporation, 2009.

\bibitem[JLL01]{james2001representations}
Gordon James, Martin~W Liebeck, and Martin Liebeck, \emph{Representations and
  characters of groups}, Cambridge University Press, 2001.

\bibitem[Lan04]{lang2004algebra}
Serge Lang, \emph{Algebra, volume 211 of}, Graduate Texts in Mathematics
  (2004).

\bibitem[Sch12]{schneider2012modular}
Peter Schneider, \emph{Modular representation theory of finite groups},
  Springer Science \& Business Media, 2012.

\bibitem[Sub24]{subroto2024wedderburndecompositioncommutativesemisimple}
Robert~Christian Subroto, \emph{Wedderburn decomposition of commutative
  semisimple group algebras using the combinatorial nullstellensatz}, 2024.

\end{thebibliography}


\end{document}